\theoremstyle{definition}
\DeclareMathOperator{\RR}{{\mathbb{R}}}
\DeclareMathOperator{\BB}{{\mathbb{B}}}
\DeclareMathOperator{\Ss}{{\mathbb{S}}}
\DeclareMathOperator{\that}{{\hat{\theta}}}
\DeclareMathOperator{\UT}{{\widetilde{U}}}
\DeclareMathOperator{\AF}{{\mathcal{A}}}
\DeclareMathOperator{\HH}{{\mathcal{H}}}
\DeclareMathOperator{\BigO}{{O}}
\newcommand{\Mm}{{M_{-}}}
\newcommand{\Mp}{{M_{+}}}
\newcommand{\Mmd}{{M_{-}'}}
\newcommand{\Mpd}{{M_{+}'}}
\newcommand{\vn}{{\nu}}
\newcommand{\upsh}{{u_{+}^{\#}}}
\newcommand{\umsh}{{u_{-}^{\#}}}
\newcommand{\domain}{{H^2_0(\Omega,\alpha)}}
\definecolor{plum}{rgb}{1.0, 0.0, 1.0}
\newtheorem{thm}{Theorem}
\newtheorem{prop}[thm]{Proposition}
\newtheorem{remark}[thm]{Remark}
\title{On Clamped Plates with Log-Convex Density}
\author{L. M. Chasman}
\address{University of Minnesota - Morris, 600 E. 4th Street, Morris, MN 56267, USA} 
\email{chasmanm$@$morris.umn.edu}
\author{Jeffrey J. Langford}
\address{Bucknell University, 1 Dent Drive, Lewisburg, PA 17837, USA}
\email{Jeffrey.Langford$@$bucknell.edu}
\date{\today}
\keywords{symmetrization, comparison results, clamped plate, Anti-Gauss space, confluent hypergeometric functions}
\subjclass[2010]{\text{Primary 35P15. Secondary 35J40, 74K20}}
\begin{document}
\begin{abstract} We consider the analogue of Rayleigh's conjecture for the clamped plate in Euclidean space weighted by a log-convex density. We show that the lowest eigenvalue of the bi-Laplace operator with drift in a given domain is bounded below by a constant $C(V,n)$ times the lowest eigenvalue of a centered ball of the same volume; the constant depends on the volume $V$ of the domain and the dimension $n$ of the ambient space. Our result is driven by a comparison theorem in the spirit of Talenti, and the constant $C(V,n)$ is defined in terms of a minimization problem following the work of Ashbaugh and Benguria. When the density is an ``anti-Gaussian," we estimate $C(V,n)$ using a delicate analysis that involves confluent hypergeometric functions, and we illustrate numerically that $C(V,n)$ is close to $1$ for low dimensions.
\end{abstract}
\maketitle

\section{Introduction}
\subsection*{History}

Given a bounded domain $\Omega \subset \mathbb{R}^n$, the frequencies and modes of vibration for a clamped plate of shape $\Omega$ are governed by the eigenvalue problem
\begin{equation}\label{Eqn:CPP}
  \begin{cases}\Delta^2 u=\Lambda u &\text{in $\Omega$},\\
  u=0&\text{on $\partial\Omega$},\\
  \frac{\partial u}{\partial \vn}=0&\text{on $\partial\Omega$},
  \end{cases}
\end{equation}
where $\Delta^2u=\Delta(\Delta u)$ is the bi-Laplace operator and $\frac{\partial u}{\partial \vn}$ is the exterior normal derivative.
The technique of estimating the frequencies of vibration of a clamped plate in terms of the plate's geometry originated with Lord Rayleigh, who conjectured that the lowest frequency of vibration for a clamped plate is bounded below by the corresponding frequency of a clamped disk of the same area \cite{RToS}. The first major progress towards Rayleigh's conjecture came from Szeg\H o \cite{S50} for dimension $n=2$ and from Talenti \cite{T81} for general dimensions. Nadirashvili \cite{N95} provided the first proof of Rayleigh's conjecture in dimension $n=2$, and  in the same year, Ashbaugh and Benguria \cite{AB95} independently proved the conjecture in dimensions $n=2$ and $3$. 
Although the conjecture is still open in dimensions $n\geq 4$, Ashbaugh and Laugesen \cite{AL96} gave a partial result which showed that the conjecture is ``asymptotically true in high dimensions.''

In this work, we consider the analogue of problem \eqref{Eqn:CPP} when Euclidean space is weighted by a radial log-convex function. We were led to study this problem because recently, Chambers \cite{C15} (see also the works \cite{BCM12, BCM14, BMP13, FM12, RCBM08}) proved the log-convex conjecture: 
centered balls are perimeter-minimizing when both volume and perimeter are weighted by a radial log-convex function.  Using this isoperimetric inequality, we establish a comparison result in the spirit of Talenti \cite{T81} and adapt the methodology of Ashbaugh and Benguria \cite{AB95} to study this weighted version of problem \eqref{Eqn:CPP}.

\subsection*{Main result and related literature}To precisely state our main result, we require some notation. Let $\Omega \subset \mathbb{R}^n$ denote a bounded $C^{\infty}$ domain and let $\phi:\mathbb{R} \to \mathbb{R}$ be a function that is even, convex, and $C^{\infty}$ with $\phi(|x|)$ real analytic on $\mathbb{R}^n$. We write $a(x)=e^{\phi(|x|)}$ and let $\alpha$ denote the absolutely continuous measure on $\mathbb{R}^n$ with density $a(x)$. We write
\begin{equation}\label{eqn:defA}
 \AF u=\frac{1}{a}\nabla\cdot(a\nabla u)=\Delta u+\nabla \phi \cdot \nabla u
\end{equation}
for a Laplacian with drift; this operator can also be considered as the weighted Laplacian for our weighted space. We are led to consider the eigenvalue problem
\begin{equation}\label{eqn:main pde problemma}
 \begin{cases}\AF^2u=\Lambda u &\text{in $\Omega$},\\
  u=0&\text{on $\partial\Omega$},\\
  \frac{\partial u}{\partial \vn}=0&\text{on $\partial\Omega$}.
  \end{cases} 
\end{equation}
In our main result, we obtain a lower bound on the first eigenvalue of problem \eqref{eqn:main pde problemma} in terms of the first eigenvalue of a centered ball (i.e., a ball centered at the origin) of the same measure. Namely, we prove:
\begin{thm}\label{mainthm} 
Let $\Omega$ be as above and suppose $\Lambda_1$ denotes the lowest eigenvalue of problem \eqref{eqn:main pde problemma}. Suppose $\Omega^{\#}$ denotes a centered ball having the same $\alpha-$measure as $\Omega$, and assume that either the dimension $n\geq 3$, or $n=2$ and the ground state for $\Omega^{\#}$ is radial. Then
\[
\Lambda_1(\Omega)\geq C\Lambda_1(\Omega^{\#}),
\]
for some constant $0<C=C(\alpha(\Omega),n)\leq 1$ depending on the $\alpha-$measure of $\Omega$ and the dimension $n$ of the ambient space.
\end{thm}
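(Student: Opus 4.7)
The plan is to adapt the Ashbaugh-Benguria methodology, with the classical isoperimetric inequality replaced by the log-convex isoperimetric inequality of Chambers. Let $u$ be a ground state of problem \eqref{eqn:main pde problemma} and let $\Omega_\pm = \{\pm u > 0\}$. On each nodal domain $\Omega_\pm$, writing $u_\pm = \max(\pm u, 0)$, we have $-\AF u_\pm = \mp \AF u$ in $\Omega_\pm$, together with $\AF(\AF u) = \Lambda_1 u$. This splits $\AF^2 u = \Lambda_1 u$ into a coupled pair of weighted Poisson equations amenable to symmetrization.

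The first step is to establish a weighted Talenti comparison: if $-\AF w = g \geq 0$ in a smooth domain $D$ with $w = 0$ on $\partial D$, then the $\alpha$-decreasing rearrangement $w^*$ on the centered ball $D^\#$ of the same $\alpha$-measure is pointwise dominated by the radial function $W$ solving $-\AF W = g^*$ on $D^\#$ with $W = 0$ on $\partial D^\#$. The argument follows Talenti's template: differentiate the $\alpha$-distribution function of $w$, use the fact that $\AF$ is self-adjoint with respect to $\alpha$ to integrate by parts against a super-level set indicator, apply Cauchy-Schwarz and the weighted co-area formula, and invoke Chambers' inequality to bound the $\alpha$-perimeter of a level set from below by that of the centered ball of equal $\alpha$-measure. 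Integrating the resulting differential inequality against its radial counterpart on $D^\#$ yields the pointwise bound.

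Applying this comparison to $(u_\pm, \mp \AF u)$ on $\Omega_\pm$, and then again to the rearranged data against $\Lambda_1 u$, produces nonnegative radial surrogates $\Up$ on $\Opsharp$ and $\Um$ on $\Omsharp$ that majorize the rearrangements of $u_\pm$ and satisfy weighted $\AF^2$-inequalities with eigenvalue $\Lambda_1$. Since $\alpha(\Opsharp) + \alpha(\Omsharp) = \alpha(\Omega^\#)$, one can assemble, following Ashbaugh-Benguria, a radial trial function $\Phi$ on $\Omega^\#$: on an inner centered ball of $\alpha$-measure $\alpha(\Opsharp)$ set $\Phi$ proportional to $\Up$, and on the complementary annulus set $\Phi$ proportional to a radially reflected copy of $\Um$, with the relative scaling chosen so that $\Phi$ and $\partial_\vn \Phi$ agree at the interface and at $\partial \Omega^\#$. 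Inserting $\Phi$ into the weighted Rayleigh quotient for $\Lambda_1(\Omega^\#)$ and minimizing over admissible scalings yields $\Lambda_1(\Omega) \geq C \Lambda_1(\Omega^\#)$, with $C = C(\alpha(\Omega), n)$ the infimum of the resulting finite-dimensional variational problem; the bound $C \leq 1$ is immediate by testing with $\Omega = \Omega^\#$.

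The main obstacle is the weighted Talenti comparison itself. The drift $\nabla \phi \cdot \nabla u$ makes $\AF$ non-self-adjoint with respect to Lebesgue measure, so every step of the co-area estimate and the rearrangement must be phrased intrinsically in the $\alpha$-weighted setting; log-convexity of $a$, which is exactly the hypothesis behind Chambers' inequality, is what makes the perimeter estimate favor the centered ball. A secondary difficulty is the dimensional restriction: when $n = 2$, the Ashbaugh-Benguria matching argument breaks unless the ground state on $\Omega^\#$ is already radial, because the weighted fundamental solution of $\AF$ in the plane does not admit the same sign-preserving rearrangement of the test function as in higher dimensions.
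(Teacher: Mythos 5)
Your outline correctly identifies the two driving ingredients — a weighted Talenti comparison built on Chambers' isoperimetric inequality, and an Ashbaugh--Benguria reduction to a two-parameter family of radial problems — but the variational step where you glue a single trial function $\Phi$ on $\Omega^\#$ is not the paper's argument, and it has a genuine gap. Setting $\Phi$ proportional to $\Up$ on an inner centered ball and to a ``radially reflected copy'' of $\Um$ on the complementary annulus does not produce a function whose weighted Rayleigh quotient is controlled by $\Lambda_1(\Omega)$: the density $a(x)=e^{\phi(|x|)}$ is radial and far from translation-invariant, so transplanting $\Um$ from the centered ball $\BB_B$ onto the annulus $\{A<|x|<R\}$ changes both $\int|\AF\Um|^2\,d\alpha$ and $\int\Um^2\,d\alpha$ by amounts depending on the geometry rather than on $\Lambda_1(\Omega)$. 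The paper never leaves the two disjoint centered balls. Proposition~\ref{Prop:Symm Rslt} produces radial $v$ on $\BB_A$, $w$ on $\BB_B$ with $\alpha(\BB_A)+\alpha(\BB_B)=\alpha(\Omega)$, a flux-matching condition linking $v'(A)$ and $w'(B)$, and
\[
\Lambda_1(\Omega)\ \ge\ \frac{\int_{\BB_A}(\AF v)^2\,d\alpha+\int_{\BB_B}(\AF w)^2\,d\alpha}{\int_{\BB_A}v^2\,d\alpha+\int_{\BB_B}w^2\,d\alpha},
\]
with the numerator \emph{exactly equal} to $\int_\Omega(\AF u)^2\,d\alpha$. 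One then \emph{defines} $C=\inf_{A,B}J_{A,B}/\Lambda_1(\Omega^\#)$, where $J_{A,B}$ is the constrained two-ball infimum \eqref{Eqn:JAB Def}. The substance is showing $C>0$, which requires Proposition~\ref{Prop:MinExt} (existence of a minimizer for $J_{A,B}$, established via an a priori bound on the boundary derivatives $\partial v_m/\partial r(A)$ and weak compactness); the bound $C\le1$ follows because the pair $(A,B)=(R,0)$ gives $J_{R,0}\le\Lambda_1(\Omega^\#)$. Your proposal does not address why the quantity you call $C$ is bounded away from zero.

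Your explanation of the $n=2$ restriction is also not the one operating in the paper. Nothing hinges on the planar fundamental solution of $\AF$. The restriction comes from Proposition~\ref{Thm:RadMod}: expanding eigenfunctions of the ball as $y(r)Y_l(\that)$, the $l$-dependent part of the Rayleigh numerator reduces after integration by parts to
\[
\int_0^R\left(\frac{k^2-2k}{r^4}\,y^2+\frac{2k}{r^4}\,(y-ry')^2\right)r^{n-1}e^{\phi(r)}\,dr,\qquad k=l(l+n-2),
\]
and for $l=1$ one has $k^2-2k=(n-1)(n-3)$, nonnegative only when $n\ge3$. This monotonicity-in-$l$ argument is what guarantees the ground state of $\BB_R$ is radial — needed both for the degenerate $B=0$ case in Proposition~\ref{Prop:MinExt} and for interpreting $\Lambda_1(\Omega^\#)$ as a radial eigenvalue. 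When $n=2$ the comparison between $l=0$ and $l=1$ is left open, hence the extra hypothesis.
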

The additional requirement in the case of $n=2$ is due to Proposition~\ref{Thm:RadMod}, where we are only able to prove the ground state is radial for dimensions $n\geq3$. It seems quite likely that the ground state is radial for $n=2$ as well, which we discuss in a remark following the statement of the proposition.

The constant $C$ is defined in terms of a $J_{A,B}$ minimization problem following the work of Ashbaugh and Benguria \cite{AB95}. After the proof of Theorem~\ref{mainthm}, we restrict our attention to the specific weight $a(x)=e^{|x|^2/2}$, solve the associated $J_{A,B}$ minimization problem, and compute eigenvalues of balls using confluent hypergeometric functions. This analysis allows us to numerically determine explicit values of $C(V,n)$. Since volume is a function of radius $R$, we may also express the constants $C(R,n)$ as functions of the radius. These constants are plotted below for dimensions $n=2,3,4$, and $5$. Observe that in these dimensions we have $C(R,n)\geq .85$, with the minimum increasing with $n$.

\begin{figure}[h!]\label{Clowerbound}
\includegraphics[width=4in]{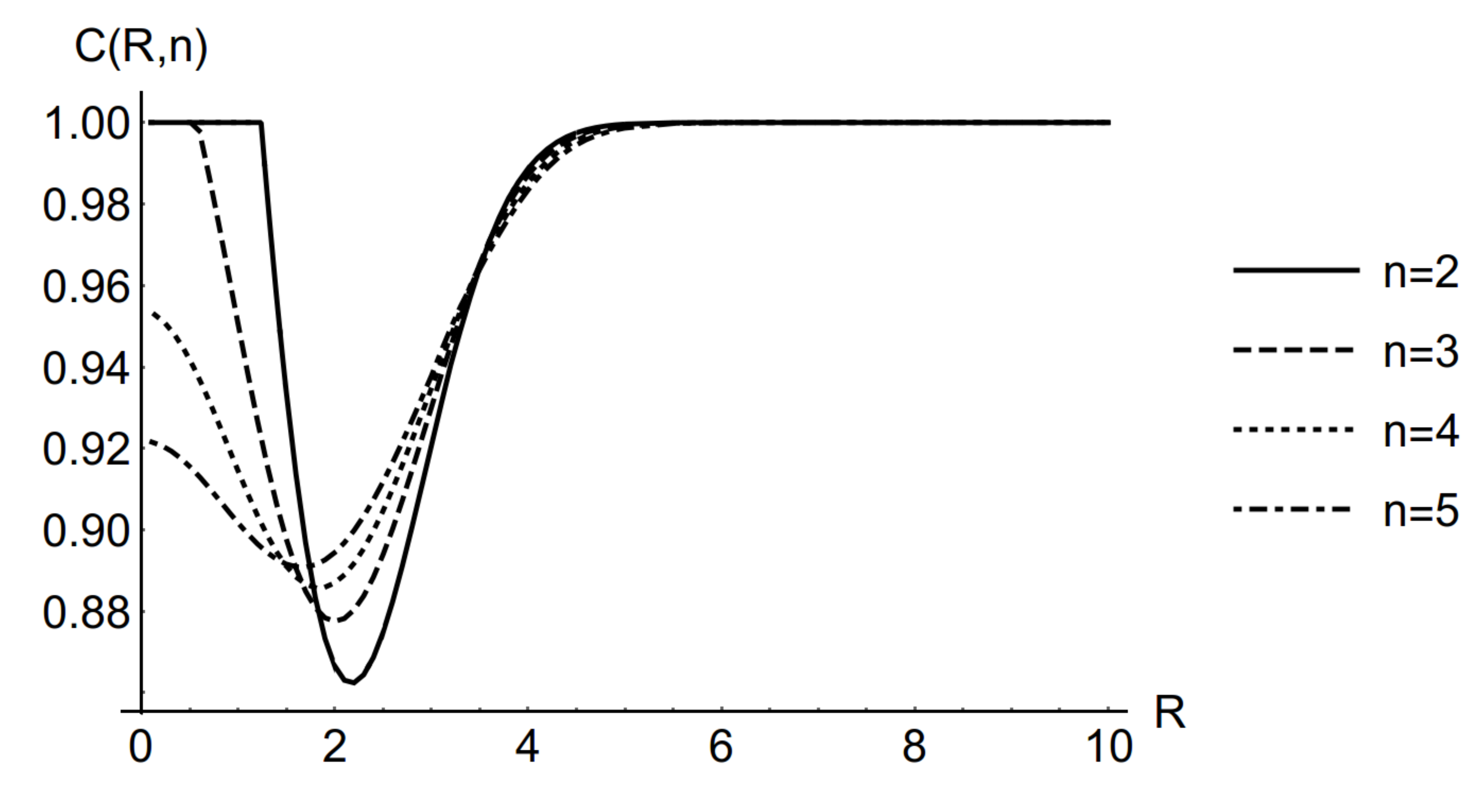}
 \caption{\label{fig:const}Graphs of the $C(R,n)$ values for low dimensions.}
\end{figure}

This paper is part of recent efforts to develop classical eigenvalue inequalities in the setting of anti-Gauss space. In \cite{BCHT15}, Brandolini, Chiacchio, Henrot, and Trombetti proved a Faber-Krahn inequality in anti-Gauss space for the analogous fixed membrane problem. Ma and Liu \cite{ML08,ML09} also studied the fixed membrane problem, and obtained lower bounds on the spectral gap for convex domains. In \cite{BCdB16}, Brock, Chiacchio, and di Blasio established a Kornhauser-Stakgold inequality in anti-Gauss space for the analogous free membrane problem. In the context of compact Riemannian manifolds with a general exponential density, Xia and Xu \cite{XX14} obtained a lower bound on the lowest clamped plate eigenvalue in terms of the lowest fixed membrane eigenvalue under certain assumptions on the manifold's Balkry-Ricci curvature. In the same Riemannian setting, Du, Wu, Li, and Xia \cite{DWLX15} obtained universal inequalities for gaps of clamped plate eigenvalues, and Ma and Li \cite{MD10} established lower bounds on eigenvalues of the drift Laplacian in terms of lower Ricci curvature bounds.

\subsection*{Outline of paper.} The rest of our paper is divided into two halves as follows. In the first half of the paper, we consider  general weighted spaces with exponential density. We identify the form of the drift Laplacian and define the relevant weighted Sobolev spaces (Section \ref{Section:Notation}). We prove the existence of eigenvalues for problem \eqref{eqn:main pde problemma} and establish regularity of the eigenfunctions (Section \ref{coercsection}). For spaces with log-convex density, we develop a symmetrization result (Section \ref{Section:Comparison}) that is used to prove Theorem \ref{mainthm}. We then identify the associated $J_{A,B}$ minimization problem, which allows us to numerically compute the values of the constants $C(R,n)$ for a given weighted space (Section \ref{Sect:ProofMainRes}).

In the second half of the paper, we focus our attention on anti-Gauss space by considering the specific weight $a(x)=e^{|x|^2/2}$. We start by collecting relevant background information on confluent hypergeometric functions (Section \ref{Sec:CHF}). These functions arise naturally in studying the $J_{A,B}$ minimization problem and eigenvalue problem on centered balls in anti-Gauss space. We explicitly solve both of these problems in terms of confluent hypergeometric functions (Sections \ref{Section:Ball} and \ref{Section:ELSoln}), and we use our solutions to numerically approximate the constants $C(R,n)$. Our paper concludes with a numerical discussion of the anti-Gaussian constants $C(R,n)$ (Section \ref{Sect:NumLowBds}).

\section{Preliminaries and Notation}\label{Section:Notation}
Here we collect the relevant notation and definitions used throughout our paper. Throughout this section, $\Omega$ denotes a bounded $C^{\infty}$ domain and $\phi:\mathbb{R} \to \mathbb{R}$ denotes a function that is even, convex, and $C^{\infty}$ on $\mathbb{R}$ with $\phi(|x|)$ real analytic on $\mathbb{R}^n$.

\subsection*{The Laplacian with drift and weighted Sobolev spaces}
We denote $a(x)=e^{\phi(|x|)}$, take $\alpha$ to be the absolutely continuous measure with density $a$, and again write $\AF$ for the Laplace operator with drift:
\[
 \AF u=\frac{1}{a}\nabla\cdot(a\nabla u)=\Delta u+\nabla \phi \cdot \nabla u.
\]
We write
\[
d\alpha=a\,dx
\]
for the absolutely continuous measure on $\RR^n$ with density $a(x)$. The $L^p(\Omega,\alpha)$ norm is defined in the usual way:
\[
\|u\|_{L^p(\Omega,\alpha)}=\left(\int_\Omega |u|^p\,d\alpha\right)^{1/p}.
\]
We write $L^p(\Omega,\alpha)$ for the space of $\alpha$-measurable functions with finite norm.

Sobolev spaces and their norms for the weighted space $(\Omega,\alpha)$ are defined in the expected manner. For instance, $H^1(\Omega,\alpha)$ denotes the space of functions in $L^2(\Omega,\alpha)$ with weak first-order partials that also belong to $L^2(\Omega,\alpha)$. The relevant norm on $H^1(\Omega,\alpha)$ is
\[
\|u\|_{H^1(\Omega,\alpha)}=\left(\int_{\Omega}|\nabla u|^2\,d\alpha+\int_{\Omega}u^2\,d\alpha \right)^{1/2}.
\]
Similarly, $H^2(\Omega,\alpha)$ denotes the space of functions in $L^2(\Omega,\alpha)$ with weak first and second-order partials that belong to $L^2(\Omega,\alpha)$, with the norm given by
\[
\|u\|_{H^2(\Omega,\alpha)}=\left(\int_{\Omega}|D^2u|^2\,d\alpha+\int_{\Omega}|\nabla u|^2\,d\alpha+\int_{\Omega}u^2\,d\alpha \right)^{1/2}.
\]
If $C_c^{\infty}(\Omega)$ denotes the collection of smooth functions on $\Omega$ with compact support, then $H_0^1(\Omega,\alpha)$ and $H_0^2(\Omega,\alpha)$ denote the closures of $C_c^{\infty}(\Omega)$ in each of $H^1(\Omega,\alpha)$ and $H^2(\Omega,\alpha)$ with respect to the corresponding norms.

The Rayleigh Quotient for the eigenvalue problem \eqref{eqn:main pde problemma} is
\[
R[u]=\frac{\int_\Omega (\AF u)^2\,d\alpha}{\int_\Omega u^2\,d\alpha}.
\]

\subsection*{Symmetrization}

Write $\mathbb{B}_R=\{x\in \mathbb{R}^n:|x|<R\}$ for the centered ball of radius $R$ in $\mathbb{R}^n$, and define $\Phi:(0,\infty)\to\mathbb{R}$ via
\[
\Phi(R)=\alpha(\mathbb{B}_R)=\beta_n \int_0^Ra(r)r^{n-1}\,dr,
\]
where $\beta_n$ denotes the $(n-1)$-dimensional Hausdorff measure of the unit sphere $\mathbb{S}^{n-1}\subset \mathbb{R}^n$. We let $\Omega^{\#}$ denote the ball
\[
\Omega^{\#}=\mathbb{B}_R,
\]
where $R$ is chosen so that $\Phi(R)=\alpha(\Omega)$. Let $\textup{Per}$ denote the perimeter of a set weighted by $a(x)$. Then for all sufficiently regular sets $E\subseteq \mathbb{R}^n$, we have
\[
\textup{Per}(E)=\int_{\partial E}a(x)\,d\mathcal H^{n-1}(x).
\]
According to the isoperimetric inequality of Chambers \cite{C15}, we then have
\begin{equation}\label{Ineq:ChamIso}
\textup{Per}(\Omega^{\#})\leq \textup{Per}(\Omega).
\end{equation}

Given $u\in L^1(\Omega,\alpha)$, we write $\lambda_u:\mathbb{R} \to [0,\infty)$ for the distribution function of $u$:
\[
\lambda_u(t)=\alpha \left(\{x\in \Omega:u(x)>t\}\right).
\]
The decreasing rearrangement $u^{\ast}$ is defined on the interval $[0,\alpha(\Omega)]$ using the distribution function:
\[
u^{\ast}(t)=
\begin{cases}
\underset{\Omega}{\textup{ess sup}}\,u & \textup{if }t=0,\\
\inf \{s:\lambda_u(s)\leq t \} & \textup{if }0<t<\alpha(\Omega),\\
\underset{\Omega}{\textup{ess inf}}\,u & \textup{if }t=\alpha(\Omega).
\end{cases}
\]
Finally, the weighted symmetrization $u^{\#}$ is defined on $\Omega^{\#}$ in terms of the decreasing rearrangement:
\[
u^{\#}(x)=u^{\ast}(\Phi(\mathbb{B}_r)),
\]
where $r=|x|$. For more on rearrangements and symmetrization methods, see \cite{Ke2}.
\section{Existence of the Spectrum and Regularity of Solutions}
\label{coercsection}

The goal of this section is to show that the problem
\begin{equation}
  \begin{cases}\AF^2 u=\Lambda u &\text{in $\Omega$},\\
  u=0&\text{on $\partial\Omega$},\\ \label{eqn:PDE prob regularity}
  \frac{\partial u}{\partial \vn}=0&\text{on $\partial\Omega$},
  \end{cases}
\end{equation}
admits a sequence of eigenvalues that tends to infinity, and that eigenfunctions exhibit regularity up to the boundary. More precisely, we prove:

\begin{prop}\label{spect} Suppose $\Omega \subset \RR^n$ denotes a bounded $C^{\infty}$ domain and $a(x)=e^{\phi(x)}$, where $\phi(x)$ is a real analytic function on $\RR^n$. Let $\alpha$ denote the absolutely continuous measure on $\RR^n$ with density $a(x)$. Define a bilinear form by
\[
A(u,v)=\int_{\Omega}(\AF u)(\AF v)\,d\alpha,\qquad u,v\in H_0^2(\Omega,\alpha).
\]
Then the eigenvalues of the operator corresponding to the bilinear form $A$ have finite multiplicity and satisfy
\[
0\leq \Lambda_1\leq \Lambda_2\leq \cdots\leq \Lambda_n\leq \cdots \to\infty\qquad\text{as $n\to\infty$}.
\]
The eigenfunctions form a complete orthonormal basis of $L^2(\Omega,\alpha)$, are real analytic in $\Omega$, and are smooth on the closure $\overline{\Omega}$.
\end{prop}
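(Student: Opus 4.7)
The plan is to argue via the spectral theorem for compact self-adjoint operators applied to the resolvent of $\AF^2$, then invoke elliptic regularity for the eigenfunctions. Because $\Omega$ is bounded, the weight $a(x)=e^{\phi(|x|)}$ is smooth and bounded between positive constants on $\overline{\Omega}$, so $H^k_0(\Omega,\alpha)$ coincides with the classical $H^k_0(\Omega)$ with equivalent norms. This reduction is what lets us reuse standard machinery for the Euclidean bi-Laplacian throughout.

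First I would verify the basic properties of the form $A$. Symmetry is clear. For boundedness, expanding $\AF u = \Delta u + \nabla\phi\cdot\nabla u$ and using the pointwise bound on $|\nabla\phi|$ on $\overline{\Omega}$ yields
\[
|A(u,v)| \le C\|u\|_{H^2(\Omega,\alpha)}\|v\|_{H^2(\Omega,\alpha)}.
\]
The key analytic step is a G\aa{}rding-type inequality on $H_0^2(\Omega,\alpha)$: there exist $c>0$ and $M\geq 0$ with
\[
A(u,u)+M\|u\|_{L^2(\Omega,\alpha)}^2 \;\geq\; c\,\|u\|_{H^2(\Omega,\alpha)}^2.
\]
To see this, expand $(\AF u)^2=(\Delta u)^2 + 2(\Delta u)(\nabla\phi\cdot\nabla u) + (\nabla\phi\cdot\nabla u)^2$, absorb the cross term by Cauchy--Schwarz with a small parameter, and use the standard elliptic estimate $\|u\|_{H^2(\Omega)}\le C\|\Delta u\|_{L^2(\Omega)}$ valid on $H^2_0(\Omega)$ (together with Poincar\'e to control $\|\nabla u\|_{L^2}$ by $\|\Delta u\|_{L^2}$ on $H^2_0$). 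The equivalence of weighted and unweighted norms on the bounded domain then upgrades this to the weighted form.

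With G\aa{}rding in hand, the shifted bilinear form $A_M(u,v)=A(u,v)+M\langle u,v\rangle_{L^2(\Omega,\alpha)}$ is coercive and bounded on $H_0^2(\Omega,\alpha)$. By Lax--Milgram, for every $f\in L^2(\Omega,\alpha)$ there is a unique $u\in H_0^2(\Omega,\alpha)$ with $A_M(u,v)=\langle f,v\rangle_{L^2(\Omega,\alpha)}$ for all $v\in H_0^2(\Omega,\alpha)$; call this map $T f = u$. Composing $T$ with the Rellich--Kondrachov compact embedding $H_0^2(\Omega,\alpha)\hookrightarrow L^2(\Omega,\alpha)$ produces a compact, self-adjoint, positive operator $K$ on $L^2(\Omega,\alpha)$. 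The spectral theorem for compact self-adjoint operators yields a decreasing sequence of eigenvalues $\mu_k\to 0^+$ of finite multiplicity with an orthonormal basis of eigenfunctions. Setting $\Lambda_k=1/\mu_k-M$ recovers the eigenvalues of $\AF^2$ with multiplicities and $\Lambda_k\to\infty$, while $\Lambda_1\geq 0$ follows directly from $A(u,u)=\int_\Omega (\AF u)^2\D\alpha\geq 0$.

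Finally, for regularity, any eigenfunction $u\in H_0^2(\Omega,\alpha)$ satisfies $\AF^2 u=\Lambda u$ weakly. Writing $\AF^2$ out in non-divergence form shows it is a uniformly elliptic fourth-order operator on $\overline{\Omega}$ with $C^\infty$ (indeed real analytic) coefficients, and the boundary conditions $u=\partial_\nu u=0$ are the standard Dirichlet data for this operator. Iterating interior $H^s$ elliptic regularity gives $u\in C^\infty(\Omega)$, and the Morrey--Nirenberg theorem on analytic hypoellipticity of elliptic operators with real analytic coefficients upgrades this to real analyticity in $\Omega$. Boundary regularity follows from the standard $C^\infty$ boundary estimates for fourth-order elliptic Dirichlet problems on $C^\infty$ domains, giving $u\in C^\infty(\overline{\Omega})$. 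I expect the main obstacle to be a clean verification of the G\aa{}rding inequality, since one must carefully handle the drift term and the weight; the regularity statements are essentially quotations of classical results once the operator is identified as fourth-order uniformly elliptic with real analytic coefficients.
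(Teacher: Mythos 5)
Your overall architecture matches the paper's: establish a G\aa rding inequality on $H^2_0(\Omega,\alpha)$, use compactness of the embedding into $L^2(\Omega,\alpha)$ to get a compact self-adjoint resolvent, invoke the spectral theorem, and then quote elliptic and analytic-hypoellipticity regularity. You run the final step through Lax--Milgram explicitly, while the paper cites Showalter's abstract eigenvalue result for coercive forms over a compactly embedded space; these are the same mechanism in different packaging.

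Your route to the G\aa rding estimate is genuinely different in the computational details. The paper integrates $A(u,u)=\int_\Omega(\AF u)^2\,d\alpha$ by parts to obtain the identity
\[
A(u,u)=\|D^2u\|^2_{L^2(\Omega,\alpha)}-\sum_{i,j}\int_\Omega u_{x_i}u_{x_j}\phi_{x_ix_j}\,a\,dx,
\]
which carries no cross term and therefore does not cost any fraction of the top-order norm. You instead expand $(\AF u)^2$ algebraically and absorb the cross term $2(\Delta u)(\nabla\phi\cdot\nabla u)$ by Cauchy--Schwarz with a small parameter, which is also workable but sacrifices an $\varepsilon$ from $\|\Delta u\|^2$. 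Both versions then face the same obstruction, and this is where your write-up has a small but real gap: you say you will use ``Poincar\'e to control $\|\nabla u\|_{L^2}$ by $\|\Delta u\|_{L^2}$,'' but a fixed-constant bound $\|\nabla u\|^2\le C\|\Delta u\|^2$ is useless here. After the Cauchy--Schwarz absorption you are left with something like $(1-\varepsilon)\|\Delta u\|^2-\tfrac{C'}{\varepsilon}\|\nabla u\|^2$, and plugging in $\|\nabla u\|^2\le C\|\Delta u\|^2$ just gives $(1-\varepsilon-\tfrac{CC'}{\varepsilon})\|\Delta u\|^2$, which is negative for small $\varepsilon$ with no way to win. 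What you actually need is the interpolation (Ehrling) inequality
\[
\|\nabla u\|^2_{L^2(\Omega)}\le\delta\|\Delta u\|^2_{L^2(\Omega)}+C_\delta\|u\|^2_{L^2(\Omega)}\qquad(u\in H^2_0(\Omega)),
\]
so that the first-order residue can be split between the top-order and zeroth-order terms, the latter then being swallowed by the shift $M\|u\|^2_{L^2}$. For $u\in H^2_0$ this follows cleanly from $\|\nabla u\|^2=-\int_\Omega u\,\Delta u$ and Young's inequality, or from the interpolation estimate in Gilbarg--Trudinger that the paper itself cites. With this substitution your G\aa rding argument closes, and the rest of the proof (compactness, spectral theorem, interior and boundary regularity, analytic hypoellipticity) is essentially the same sequence of quotations as in the paper.
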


\begin{proof}
We show the existence of positive constants $K_1, K_2$, and $K_3$ such that
\begin{equation} \label{Ineq: Coerc}
 K_1\|u\|_{H^2(\Omega,\alpha)}^2\leq A(u,u)+K_2\|u\|_{L^2(\Omega,\alpha)}^2\leq K_3\|u\|_{H^2(\Omega,\alpha)}^2
\end{equation}
for all functions $u\in H^2_0(\Omega,\alpha)$. First assume $u\in C_c^{\infty}(\Omega)$. Integrating by parts, we have
\begin{equation} \label{Eqn:IBP Coerc}
A(u,u)=-\sum_{i,j}\int_{\Omega}u_{x_i}u_{x_j}\phi_{x_ix_j}a\,dx+\|D^2u\|^2_{L^2(\Omega,\alpha)}.
\end{equation}
Let $M$ denote a constant such that $|\phi_{x_ix_j}|\leq M$ on $\overline{\Omega}$ for every $i,j$. Then by standard estimates,
\begin{equation} \label{Ineq:CS Coerc}
\left |\sum_{i,j}\int_{\Omega}u_{x_i}u_{x_j}\phi_{x_ix_j}a\,dx \right| \leq n M \| \nabla u \|^2_{L^2(\Omega,\alpha)}.
\end{equation}
By the proof of Theorem 7.27 in \cite{GT}, for each $0<\varepsilon<\frac{1}{2}$, there exists a constant $C_1=C_1(\varepsilon)$ so that for all $u\in C_c^{\infty}(\Omega)$, we have
\begin{equation*}
\| \nabla u\|^2_{L^2(\Omega)} \leq \varepsilon \|u\|^2_{H^2(\Omega)}+\frac{C_1}{\varepsilon}\|u\|^2_{L^2(\Omega)}.
\end{equation*}
This inequality implies
\begin{equation} \label{Ineq:GT Coerc}
\|D^2u\|^2_{L^2(\Omega)}\geq \left(\frac{1}{\varepsilon}-1\right)\| \nabla u \|^2_{L^2(\Omega)}-\left(\frac{C_1}{\varepsilon^2}+1\right)\|u\|^2_{L^2(\Omega)}.
\end{equation}
Next, let $C_2$ and $C_3$ be constants with $0<C_2\leq a(x)\leq C_3$ on $\overline{\Omega}$. We 
combine \eqref{Eqn:IBP Coerc} with estimates  \eqref{Ineq:CS Coerc} and \eqref{Ineq:GT Coerc}. For any $\delta, K_2>0$ we have
\begin{align*}
& A(u,u)+K_2\|u\|^2_{L^2(\Omega,\alpha)} \\
&  \geq \|D^2u\|^2_{L^2(\Omega,\alpha)}-nM\| \nabla u\|^2_{L^2(\Omega,\alpha)}+K_2\|u\|^2_{L^2(\Omega,\alpha)} \\
& \geq C_2 \left( \|D^2u\|^2_{L^2(\Omega)}-\frac{nMC_3}{C_2}\| \nabla u\|^2_{L^2(\Omega)}+K_2\|u\|^2_{L^2(\Omega)}\right)\\
& \geq C_2  \left( (1-\delta) \|D^2u\|^2_{L^2(\Omega)} + \left(\frac{\delta}{\varepsilon}-\delta-\frac{nMC_3}{C_2}\right)\| \nabla u\|^2_{L^2(\Omega)} + \left(K_2-\frac{C_1\delta}{\varepsilon^2}-\delta \right)\|u\|^2_{L^2(\Omega)}\right) \\
& \geq \frac{C_2}{C_3}  \left( (1-\delta) \|D^2u\|^2_{L^2(\Omega,\alpha)} + \left(\frac{\delta}{\varepsilon}-\delta-\frac{nMC_3}{C_2}\right)\| \nabla u\|^2_{L^2(\Omega,\alpha)} + \left(K_2-\frac{C_1\delta}{\varepsilon^2}-\delta \right)\|u\|^2_{L^2(\Omega,\alpha)}\right)\\
& \geq K_1 \|u\|^2_{H^2(\Omega,\alpha)},
\end{align*}
where we take $K_1=\frac{C_2}{C_3}\min \left \{ 1-\delta, \frac{\delta}{\varepsilon}-\delta-\frac{nMC_3}{C_2}, K_2-\frac{C_1\delta}{\varepsilon^2}-\delta \right \}$. Note that the second-to-last inequality assumes $K_1$ is positive. However, it is easy to see that given $0<\delta<1$, we may choose positive values for $\varepsilon$ and $K_2$ to ensure $K_1$ is indeed positive. With this value of $K_2$, we again use \eqref{Ineq:CS Coerc} to estimate
\begin{align*}
A(u,u) + K_2\|u\|^2_{L^2(\Omega,\alpha)} & \leq nM\| \nabla u \|^2_{L^2(\Omega,\alpha)}+\|D^2u\|^2_{L^2(\Omega,\alpha)}+K_2\|u\|^2_{L^2(\Omega,\alpha)}\\
& \leq K_3 \|u\|^2_{H^2(\Omega,\alpha)},
\end{align*}
where $K_3=\max \{1, nM, K_2\}$. Both inequalities of \eqref{Ineq: Coerc} are now established for $u\in C_c^{\infty}(\Omega)$. The same inequalities hold for general $u\in H^2_0(\Omega,\alpha)$ by approximation.

Next, the space $\domain$ is compactly embedded in $L^2(\Omega,\alpha)$ since the analogous result holds true for unweighted spaces (see Theorem 1 of \cite[p. 288]{E} and the remark following its proof). By \cite[Corollary 7.8, p. 88]{SH77}, the bilinear form $A$ has a set of weak eigenfunctions that form an orthonormal basis of $L^2(\Omega, \alpha)$. By the same result, eigenspaces are finite dimensional and the sequence of eigenvalues is bounded below and increasing to infinity. Nonnegativity of the spectrum follows immediately from inspecting the Rayleigh quotient.

 Regularity of the eigenfunctions follows from standard regularity results \cite[p. 668]{Nir55}, the Trace Theorem  \cite[Prop 4.3, p. 286 and Prop 4.5, p. 287]{taylor}, and the Analyticity Theorem \cite[p. 136]{BJS}. Weak eigenfunctions therefore solve problem \eqref{eqn:PDE prob regularity} in the classical sense.

\end{proof}

\section{Symmetrization}\label{Section:Comparison}
Throughout this section, $\Lambda_1$ denotes the lowest eigenvalue and $u$ denotes a corresponding eigenfunction for the problem 
\begin{equation}
\begin{cases}
\AF ^{2}u=\Lambda u&\text{in $\Omega$},\\
u=0\ &\text{on $\partial\Omega$,}\\ \label{eqn:PDE problem sym}
\frac{\partial u}{\partial \nu}=0\ &\text{on $\partial\Omega$.}
\end{cases}
\end{equation}
Recall from Theorem~\ref{spect} that the eigenfunctions are real analytic in $\Omega$, and are smooth on the closure $\overline{\Omega}$. In general, eigenfunctions change sign, so our next goal is to establish a comparison result for the positive and negative parts of $u$.  The argument below parallels Talenti's argument for the Euclidean clamped plate. See \cite{T81} and also \cite{CL16} for the analogous argument in Gauss space (where unbounded domains are considered).

At this point the reader might find it useful to review the notation and definitions of Section \ref{Section:Notation}.
\begin{prop}\label{Prop:Symm Rslt}
Suppose $\Omega\subset\RR^n$ is a bounded $C^\infty$ domain and $u$ is a principal eigenfunction as above. Assume $\phi$ is an even, convex $C^{\infty}$ function defined on $\mathbb{R}$ for which $\phi(|x|)$\footnote{Here and in the remainder of the paper we will abuse notation, writing $\phi$ for the both the initial function defined on $\mathbb{R}$ and also its radial extension to all of $\mathbb{R}^n$.} defines a radial real analytic function on $\RR^n$. Write $a(x)=e^{\phi(|x|)}$. Then there are constants $A, B\geq0$ satisfying
\[
\alpha(\BB_A)+\alpha(\BB_B)=\alpha(\Omega)
\]
and functions $v\in H^1_0(\BB_A,\alpha)\cap H^2(\BB_A,\alpha)$, $w\in H^1_0(\BB_B,\alpha)\cap H^2(\BB_B,\alpha)$ where
\[
\upsh\leq v(x) \text{ in } \BB_A \quad \text{and} \quad \umsh\leq w(x) \text{ in } \BB_B.
\]
Here $\upsh$ and $\umsh$ denote the weighted symmetrizations of the positive and negative parts of $u$, respectively, as defined in Section $\ref{Section:Notation}$. In addition, the functions $u, v$, and $w$ satisfy
\[
\frac{\int_{\Omega}(\AF u)^2 \,d\alpha}{\int_{\Omega}u^2\,d\alpha}\geq \frac{\int_{\BB_A}(\AF v)^2\,d\alpha+\int_{\BB_B}(\AF w)^2\,d\alpha}{\int_{\BB_A}v^2\,d\alpha+\int_{\BB_B}w^2\,d\alpha},
\]
and
\[
\frac{\partial v}{\partial r}(A)A^{n-1}a(A)=\frac{\partial w}{\partial r}(B)B^{n-1}a(B).
\]
\end{prop}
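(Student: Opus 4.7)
My plan is to adapt Talenti's~\cite{T81} 1981 strategy for the Euclidean clamped plate to the weighted log-convex setting: decompose $\Omega$ along the nodal set of $u$, and apply a weighted Talenti comparison on each sign region, with Chambers' weighted isoperimetric inequality~\eqref{Ineq:ChamIso} replacing the classical one. The eigenvalue equation itself does not enter this proposition directly---only the vanishing Cauchy data $u=\partial_\nu u=0$ on $\partial\Omega$ and the real analyticity of $u$ in $\Omega$, both furnished by Proposition~\ref{spect}, will be used. As setup, analyticity forces the nodal set $\{u=0\}\cap\Omega$ to have $\alpha$-measure zero, so the open sets $\Omega_\pm=\{\pm u>0\}$ satisfy $\alpha(\Omega_+)+\alpha(\Omega_-)=\alpha(\Omega)$, fixing $A,B\geq 0$ via $\alpha(\BB_A)=\alpha(\Omega_+)$ and $\alpha(\BB_B)=\alpha(\Omega_-)$. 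Writing $f=-\AF u$, the restriction $u|_{\Omega_+}$ is a non-negative solution of $-\AF u=f$ in $\Omega_+$ vanishing on $\partial\Omega_+$, with the analogous identity for $-u|_{\Omega_-}$.

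For the Talenti-type comparison on $\Omega_+$, let $\mu(t)=\alpha(\{u>t\})$ and $u^*$ its decreasing rearrangement. The divergence theorem applied to $a\nabla u$ and the weighted coarea formula give
\[
\int_{\{u>t\}} f\,d\alpha=\int_{\{u=t\}} a|\nabla u|\,d\mathcal{H}^{n-1},\qquad -\mu'(t)=\int_{\{u=t\}}\frac{a}{|\nabla u|}\,d\mathcal{H}^{n-1}.
\]
Cauchy--Schwarz bounds the squared weighted perimeter of $\{u>t\}$ by the product of these two quantities; Chambers' inequality~\eqref{Ineq:ChamIso} bounds this perimeter below by $\beta_n R(t)^{n-1}a(R(t))$ with $R(t)=\Phi^{-1}(\mu(t))$; and Hardy--Littlewood gives $\int_{\{u>t\}} f\,d\alpha\leq \int_0^{\mu(t)} f^*\,d\sigma$, where $f^*$ denotes the signed decreasing rearrangement of $f|_{\Omega_+}$. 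Combining these and passing from $t$ to $s=\mu(t)$ yields
\[
-\frac{du^*}{ds}(s)\leq \frac{1}{\beta_n^2\,\Phi^{-1}(s)^{2(n-1)}\,a(\Phi^{-1}(s))^2}\int_0^s f^*(\sigma)\,d\sigma
\]
for a.e.\ $s\in(0,\alpha(\Omega_+))$; the inner integral is non-negative since super-level sets of $u$ realize non-negative values of $\int_E f\,d\alpha$ through the divergence identity above. Integrating from $s$ to $\alpha(\BB_A)$ identifies the bound with the radial Dirichlet solution $v$ of $-\AF v=f^{\#}$ on $\BB_A$, where $f^{\#}$ is the weighted symmetrization of $f|_{\Omega_+}$. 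Standard weighted elliptic regularity places $v$ in $H^1_0(\BB_A,\alpha)\cap H^2(\BB_A,\alpha)$, and the construction yields $\upsh\leq v$. Repeating with $-u|_{\Omega_-}$ produces $w$ on $\BB_B$ with $\umsh\leq w$.

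Equimeasurability of rearrangement gives $\int_{\Omega_+}(\AF u)^2\,d\alpha=\int_{\BB_A}(\AF v)^2\,d\alpha$, and similarly on the negative side, so the two numerators in the Rayleigh inequality coincide. The pointwise comparisons together with $L^2$-equimeasurability of symmetrization then give $\int_\Omega u^2\,d\alpha\leq\int_{\BB_A}v^2\,d\alpha+\int_{\BB_B}w^2\,d\alpha$, completing the Rayleigh bound. For the flux, the divergence theorem on $\Omega_+$ combined with $\partial_\nu u=0$ on $\partial\Omega$ reduces $\int_{\Omega_+}\AF u\,d\alpha$ to a boundary integral over the common nodal interface, which flips sign under swapping orientations on $\Omega_\pm$, so $\int_{\Omega_+}\AF u\,d\alpha=-\int_{\Omega_-}\AF u\,d\alpha$. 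Applied to the radial $v$, the divergence theorem yields $\beta_n a(A)A^{n-1}v'(A)=\int_{\BB_A}\AF v\,d\alpha=-\int_{\Omega_+}f\,d\alpha=\int_{\Omega_+}\AF u\,d\alpha$ via equimeasurability of $f^{\#}$, and the analogous identity for $w$ delivers the stated flux equality.

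The principal obstacle is the Talenti step itself in the weighted signed-source setting: one must use the signed rearrangement $f^*$ (rather than $|f|^*$) to secure \emph{simultaneously} the pointwise comparison, the equality of Rayleigh numerators (which incidentally holds for either choice since $\int(f^*)^2=\int(|f|^*)^2=\int f^2$), and the correct flux identity (which fails for $|f|^*$). Secondary care is needed to handle critical levels of $u$ via Sard's theorem together with real analyticity, and to verify finiteness of weighted perimeter so that Chambers' inequality applies to the super-level sets. The overall architecture is parallel to the Euclidean case of \cite{T81} and the Gauss-space case of \cite{CL16}, with Chambers' inequality supplying the isoperimetric input proper to the log-convex weighted setting.
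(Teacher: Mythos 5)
Your proof is correct and arrives at exactly the stated conclusion, but the rearrangement step differs from the paper's in a way worth flagging. The paper works with the \emph{global} signed decreasing rearrangement of $-\AF u$ over all of $\Omega$: it defines $f(s)=(\AF u)_-^*(s)-(\AF u)_+^*(\alpha(\Omega)-s)$ on $[0,\alpha(\Omega)]$, where the starred functions are rearrangements of $(\AF u)_\pm$ over the whole domain, and the radial source for $v$ on $\BB_A$ is the initial chunk $f\circ\Phi$ restricted to $[0,\Phi(A)]$, with $w$ built from the tail via $h(s)=-f(\alpha(\Omega)-s)$. You instead restrict $-\AF u$ to each nodal domain $\Omega_\pm$ and rearrange within each piece, so your $v$ solves $-\AF v = ((-\AF u)|_{\Omega_+})^{\#}$, which is in general a different comparison function. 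Both routes are valid Hardy--Littlewood applications: yours uses $\int_{\{u>t\}}g\,d\alpha\leq\int_0^{\mu(t)}(g|_{\Omega_+})^*\,d\sigma$ for $\{u>t\}\subset\Omega_+$, while the paper uses the same set but compares against the global rearrangement. The trade-offs are mild: your per-nodal-domain rearrangement makes the numerator identity $\int_{\BB_A}(\AF v)^2\,d\alpha=\int_{\Omega_+}(\AF u)^2\,d\alpha$ an immediate consequence of equimeasurability, whereas the paper requires the small bookkeeping argument that splits $\int_0^{\alpha(\Omega)}f(s)^2\,ds$ into disjointly-supported pieces; conversely, the paper's single function $f$ on $[0,\alpha(\Omega)]$ links $v$ and $w$ directly, which streamlines the flux equality and is the form inherited from Talenti~\cite{T81}, Ashbaugh--Benguria~\cite{AB95}, and the Gauss-space treatment~\cite{CL16}. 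One small caution: you justify $\int_{\Omega_+}\AF u\,d\alpha=-\int_{\Omega_-}\AF u\,d\alpha$ by applying the divergence theorem on each $\Omega_\pm$ and canceling nodal-interface contributions, which tacitly requires regularity of $\{u=0\}\cap\Omega$; it is cleaner (and is what the paper does) to invoke the global identity $\int_\Omega\AF u\,d\alpha=0$ from the clamped boundary data on $\partial\Omega$, together with the fact that real analyticity forces the nodal set to have $\alpha$-measure zero, so the two integrals must be additive inverses without any interface analysis.
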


\begin{proof}
For $t\in\RR$, let $\gamma(t)=\alpha \left(\{x\in \RR^n:u(x)>t\}\right)$ denote the distribution function of $u$. By Cauchy-Schwarz, we have
\[
\left( \frac{1}{h}\int_{\{t<u\leq t+h\}}|\nabla u|\,d\alpha \right)^2 \leq \frac{\gamma(t)-\gamma(t+h)}{h}\cdot \frac{1}{h}\int_{\{t<u\leq t+h\}}|\nabla u|^2\,d\alpha.
\]
Invoking the coarea formula twice and taking the limit as $h\to 0$, we deduce
\begin{equation}\label{Ineq:PerBd}
\left( \textup{Per}\left( \{u>t\}\right)\right)^2\leq -\gamma'(t)\int_{\{u=t\}}|\nabla u|a\,d\HH^{n-1}.
\end{equation}
Here we have used (and in what follows we continue to use) the identity
\[
\partial \{u>t\}=\{u=t\}
\]
which holds for almost every $t$ by Sard's Theorem. By the Divergence Theorem, we have
\begin{equation}\label{Eqn:DivThmAu}
\int_{\{u=t\}}|\nabla u|a\,d\HH^{n-1}=\int_{\{u>t\}}-\AF u\,d \alpha.
\end{equation}
Combining \eqref{Ineq:ChamIso} with \eqref{Ineq:PerBd} and \eqref{Eqn:DivThmAu}, we deduce 
\begin{equation} \label{eqn:Per1}
\left(\textup{Per}\left(\{u^{\#}>t\}\right)\right)^2  \leq -\gamma'(t)\int_{\{u>t\}}-\AF u\,d\alpha \leq -\gamma'(t)\int_0^{\gamma(t)}\left[(\AF u)_{-}^{\ast}(s)-(\AF u)_{+}^{\ast}(\alpha(\Omega)-s)\right]\,ds,
\end{equation}
where $(\AF u)_{+}^{\ast}$ and $(\AF u)_{-}^{\ast}$ denote the decreasing rearrangements of the positive and negative parts of $\AF u$, respectively.

Define
\[
f(s)=(\AF u)_{-}^{\ast}(s)-(\AF u)_{+}^{\ast}(\alpha(\Omega)-s),\quad 0\leq s\leq \alpha(\Omega),
\]
and suppose $\rho=\rho(t)$ satisfies
\[
\{u^{\#}>t\}=\BB_\rho.
\]
We then have $\Phi(\rho)=\gamma(t)$, which implies $\rho=\Phi^{-1}(\gamma(t))$. The perimeter of $\{u^{\#}>t\}$ then satisfies
\begin{equation}\label{eqn:Per2}
\textup{Per}(\{u^{\#}>t\})=\int_{\partial \BB_\rho}a\,d\HH^{n-1}=\beta_n\rho^{n-1}a(\rho).
\end{equation}
Combining \eqref{eqn:Per1} and \eqref{eqn:Per2}, we have
\[
1\leq -\frac{1}{\beta_n^2}\rho^{2-2n}a(\rho)^{-2}\gamma'(t)\int_0^{\gamma(t)}f(s)\,ds,
\]
and integrating from $0$ to $t$ yields
\[
t\leq -\frac{1}{\beta_n^2}\int_0^t\left(\Phi^{-1}(\gamma(s))^{2-2n}a(\Phi^{-1}(\gamma(s)))^{-2}\gamma'(s)\int_0^{\gamma(s)}f(q)\,dq\right)\,ds.
\]
Making the change of variable $z=\gamma(s)$ and letting $t=u_+^{\#}(x)$ yields
\[
u_+^{\#}(x)\leq \frac{1}{\beta_n^2}\int_{\Phi(r)}^{\gamma(0)}\left(\Phi^{-1}(z)^{2-2n}a(\Phi^{-1}(z))^{-2}\int_0^zf(q)\,dq\right)\,dz,
\]
where $r=|x|$. Define a number $A$ satisfying $\Phi(A)=\gamma(0)$ and define a radial function $v(r)$ on $\BB_A$ using the formula
\[
v(r)=\frac{1}{\beta_n^2}\int_{\Phi(r)}^{\Phi(A)}\left(\Phi^{-1}(z)^{2-2n}a(\Phi^{-1}(z))^{-2}\int_0^zf(q)\,dq\right)\,dz.
\]
By construction, the functions $\upsh$ and $v$ satisfy the inequality
\begin{equation}\label{eqn:Comp1}
u_+^{\#}\leq v \textup{ in } \BB_A.
\end{equation}
A straightforward calculation shows that $v$ solves the problem
\begin{equation*}
\begin{cases}
-\AF v =f\circ \Phi &\text{in $\BB_A$},\\
v=0\ &\text{on $\partial \BB_A$.}
\end{cases}
\end{equation*}

Letting $y=-u$, we see that $y$ is also a principal eigenfunction for problem \eqref{eqn:PDE problem sym}. Define $h(s)$ on $[0,\alpha(\Omega)]$ via
\begin{align*}
h(s)&=(\AF y)_-^{\ast}(s)-(\AF y)_+^{\ast}(\alpha(\Omega)-s)\\
&=(\AF u)_{+}^{\ast}(s)-(\AF u)_-^{\ast}(\alpha(\Omega)-s)\\
&=-f(\alpha(\Omega)-s).
\end{align*}
Define a number $B$ satisfying $\Phi(B)=\alpha(\{y>0\})=\alpha(\{u<0\})$ and define a radial function $w(r)$ on $\BB_B$ using the formula
\[
w(r)=\frac{1}{\beta_n^2}\int_{\Phi(r)}^{\Phi(B)}\left(\Phi^{-1}(z)^{2-2n}a(\Phi^{-1}(z))^{-2}\int_0^zh(q)\,dq\right)\,dz.
\]
Then as above, $w$ solves the problem
\begin{equation*}
\begin{cases}
-\mathcal{A}w =h\circ \Phi &\text{in $\BB_B$},\\
w=0\ &\text{on $\partial \BB_B$},
\end{cases}
\end{equation*}
and $u_-^{\#}$ and $w$ satisfy
\begin{equation}\label{eqn:Comp2}
u_-^{\#}\leq w \textup{ in }\BB_B.
\end{equation}
Next, note that the Divergence Theorem implies
\begin{equation}\label{eqn:Auintegral}
\int_{\Omega}\AF u\,d\alpha=\int_{\partial \Omega}a\nabla u \cdot \nu\,d\HH^{n-1}=0.
\end{equation}
By construction, the numbers $A$ and $B$ satisfy
\[
\Phi(A)+\Phi(B)=\alpha(\Omega).
\]
Combining with \eqref{eqn:Auintegral}, we have
\[
\int_{\BB_A}-\AF v\,d\alpha=\int_{\BB_A}f\circ \Phi\,d\alpha=\int_0^{\Phi(A)}f(s)\,ds=-\int_0^{\Phi(B)}f(\alpha(\Omega)-s)\,ds=\int_{\BB_B}-\AF w\,d\alpha.
\]
On the other hand, by the Divergence Theorem, we see
\[
\int_{\BB_A}\AF v\,d\alpha=\int_{\partial \BB_A}a\nabla v\cdot \nu \,d\HH^{n-1}=\beta_n \frac{\partial v}{\partial r}(A)A^{n-1}a(A).
\]
We conclude that
\[
\frac{\partial v}{\partial r}(A)A^{n-1}a(A)=\frac{\partial w}{\partial r}(B)B^{n-1}a(B).
\]
We next compute
\begin{align*}
\int_{\BB_A}(\AF v)^2\,d\alpha+\int_{\BB_B}(\AF w)^2\,d\alpha & =\int_{\BB_A}\left[(\AF u)_-^{\ast}(\Phi(r))-(\AF u)_+^{\ast}(\alpha(\Omega)-\Phi(r))\right]^2\,d\alpha\\
&\quad +\int_{\BB_B}\left[(\AF u)_+^{\ast}(\Phi(r))-(\AF u)_-^{\ast}(\alpha(\Omega)-\Phi(r))\right]^2\,d\alpha\\
&= \int_0^{\Phi(A)}\left[(\AF u)_-^{\ast}(s)-(\AF u)_+^{\ast}(\alpha(\Omega)-s)\right]^2\,ds\\
&\quad + \int_0^{\Phi(B)}\left[(\AF u)_+^{\ast}(s)-(\AF u)_-^{\ast}(\alpha(\Omega)-s)\right]^2\,ds\\
&=\int_0^{\alpha(\Omega)}\left[(\AF u)_-^{\ast}(s)-(\AF u)_+^{\ast}(\alpha(\Omega)-s)\right]^2\,ds.
\end{align*}
The functions $(\AF u)_+^{\ast}(s)$ and $(\AF u)_-^{\ast}(\alpha(\Omega)-s)$ are never simultaneously nonzero, and we therefore deduce
\[
\int_{\BB_A}(\AF v)^2\,d\alpha+\int_{\BB_B}(\AF w)^2\,d\alpha=\int_0^{\alpha(\Omega)}[(\AF u)_-^{\ast}(s)^2+(\AF u)_+^{\ast}(s)^2]\,ds=\int_{\Omega}(\AF u)^2\,d\alpha.
\]
Combining the equality immediately above with inequalities \eqref{eqn:Comp1} and \eqref{eqn:Comp2}, we conclude
\[
\Lambda_1=\frac{\int_{\Omega}(\AF u)^2\,d\alpha}{\int_{\Omega}u^2\,d\alpha}\geq \frac{\int_{\BB_A}(\AF v)^2\,d\alpha+\int_{\BB_B}(\AF w)^2\,d\alpha}{\int_{\BB_A}v^2\,d\alpha+\int_{\BB_B}w^2\,d\alpha}.
\]
A straightforward calculation shows that both $\left(\frac{\partial v}{\partial r}\right)^2r^{n-1}$ and $\left(\frac{\partial^2 v}{\partial r^2}\right)^2r^{n-1}$ remain bounded as $r\to 0$, so that $\frac{\partial v}{\partial r}, \frac{\partial^2 v}{\partial r^2}\in L^2(\BB_A,\alpha)$. Since $-\AF v=f\circ \Phi \in L^2(\BB_A,\alpha)$, it follows that $v\in H_0^1(\BB_A,\alpha)\cap H^2(\BB_A,\alpha)$ and similarly $w\in H_0^1(\BB_B,\alpha)\cap H^2(\BB_B,\alpha)$.
\end{proof}

\section{Proof of Main Result, the $J_{A,B}$ minimization problem, and further consequences with log-convex density}\label{Sect:ProofMainRes}

We begin this section with a proof of Theorem \ref{mainthm}:
\begin{proof}
Let $u$ be a principal eigenfunction for $\Lambda_1(\Omega)$. Then by Proposition \ref{Prop:Symm Rslt}, there exist numbers $A$ and $B$ satisfying $\alpha(\BB_A)+\alpha(\BB_B)=\alpha(\Omega)$ and radial functions functions $v\in H^1_0(\BB_A,\alpha)\cap H^2(\BB_A,\alpha), w\in H^1_0(\BB_B,\alpha)\cap H^2(\BB_B,\alpha)$ where
\begin{equation}\label{eqn:UBonJAB}
\Lambda_1(\Omega)=\frac{\int_{\Omega}(\AF u)^2 \,d\alpha}{\int_{\Omega}u^2\,d\alpha}\geq \frac{\int_{\BB_A}(\AF v)^2\,d\alpha+\int_{\BB_B}(\AF w)^2\,d\alpha}{\int_{\BB_A}v^2\,d\alpha+\int_{\BB_B}w^2\,d\alpha}.
\end{equation}
Define
\begin{equation}\label{Eqn:JAB Def}
J_{A,B}=\inf_{(v,w)} \left \{ \frac{\int_{\BB_A}(\AF v)^2\,d\alpha+\int_{\BB_B}(\AF w)^2\,d\alpha}{\int_{\BB_A}v^2\,d\alpha+\int_{\BB_B}w^2\,d\alpha} \right \},
\end{equation}
where the $\inf$ is taken over all pairs of radial functions $(v,w)$ with $v\in H^1_0(\BB_A,\alpha)\cap H^2(\BB_A,\alpha), w\in H^1_0(\BB_B,\alpha)\cap H^2(\BB_B,\alpha)$ satisfying
\begin{equation}\label{JAB bdry cond}
\frac{\partial v}{\partial r}(A)A^{n-1}a(A)=\frac{\partial w}{\partial r}(B)B^{n-1}a(B).
\end{equation}
We therefore have
\begin{align*}
\Lambda_1(\Omega) & \geq J_{A,B}\\
& \geq \inf \{ J_{A,B}: \alpha(\BB_A)+\alpha(\BB_B)=\alpha(\Omega)\}\\
&= \left( \frac{\inf \{ J_{A,B}: \alpha(\BB_A)+\alpha(\BB_B)=\alpha(\Omega)\}}{\Lambda_1(\Omega^{\#})}\right) \Lambda_1(\Omega^{\#}).
\end{align*}
We take the constant $C$ to be
\begin{equation}
C= \frac{\inf \{ J_{A,B}: \alpha(\BB_A)+\alpha(\BB_B)=\alpha(\Omega)\}}{\Lambda_1(\Omega^{\#})} \label{eqn:Cdef}.
\end{equation}
 Note that $C>0$ by Proposition~\ref{Prop:MinExt}, since the infimum is attained by $v,w$ in the appropriate spaces. We have $C\leq1$ by the earlier relation \eqref{eqn:UBonJAB}. 

\end{proof}

We now proceed with our detailed study of $J_{A,B}$ as defined in \eqref{Eqn:JAB Def}. Our first goal is to show that $J_{A,B}$ is achieved by an appropriate pair of functions $(v,w)$.

\begin{prop}\label{Prop:MinExt} When $n\geq3$ or $n=2$ and the fundamental mode is radial, the infimum defining $J_{A,B}$ in \eqref{Eqn:JAB Def} is achieved by some pair of functions $(v,w)$. Moreover, a minimizing pair $(v,w)$ satisfies
\begin{align*}
\AF^2 v&=\mu v \quad \textup{in }\BB_A,\\
\AF^2w&=\mu w \quad \textup{in }\BB_B, 
\end{align*}
where $\mu=J_{A,B}$, together with the natural boundary condition
\[
\AF v(A)+\AF w(B)=0.
\]
\end{prop}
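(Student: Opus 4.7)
My plan is to produce a minimizing pair by the direct method in the calculus of variations, and then to recover both the interior equations and the natural boundary condition from the first variation.

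For existence, the key ingredient is the coercivity estimate $\|v\|_{H^2(\BB_A,\alpha)} \leq C\|\AF v\|_{L^2(\BB_A,\alpha)}$ for $v \in H^1_0(\BB_A,\alpha) \cap H^2(\BB_A,\alpha)$, and analogously on $\BB_B$. Since $a$ is bounded above and below on the closed balls, this reduces to the standard $H^2$-regularity estimate for the Dirichlet problem for the second-order uniformly elliptic operator $\AF = \Delta + \nabla\phi\cdot\nabla$; the strong maximum principle shows $\AF$ is injective on $H^1_0 \cap H^2$, so no lower-order correction term is needed. With this estimate in hand, I would take a minimizing sequence $(v_k,w_k)$ of admissible radial pairs, normalized so that the denominator equals $1$, extract weak $H^2$ limits $(v,w)$ (automatically radial because the radial subspace is weakly closed), and use the compact embedding $H^2 \hookrightarrow L^2$ together with continuity of the normal-derivative trace from $H^2$ into $H^{1/2}$ of the boundary to pass both the normalization and the compatibility condition \eqref{JAB bdry cond} to the limit. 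Weak lower semicontinuity of the quadratic form $v \mapsto \int_{\BB_A}(\AF v)^2\,d\alpha$ then certifies that $(v,w)$ attains $J_{A,B}$.

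For the Euler--Lagrange system, set $\mu = J_{A,B}$ and use that the first variation of $N - \mu D$ vanishes along admissible variations, where $N$ and $D$ denote the numerator and denominator of the Rayleigh-type quotient. Testing first with pairs $(\eta, 0)$ where $\eta$ is a radial function in $C_c^\infty(\BB_A)$, which trivially respects \eqref{JAB bdry cond}, yields $\int_{\BB_A}(\AF v)(\AF \eta)\,d\alpha = \mu \int_{\BB_A} v\eta\,d\alpha$; integrating by parts twice against the $a$-weighted measure produces $\AF^2 v = \mu v$ in $\BB_A$ distributionally, and elliptic regularity for $\AF^2$ promotes $v$ to a classical solution, with the analogous conclusion for $w$ on $\BB_B$. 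To extract the natural boundary condition I would next test against admissible radial pairs $(\eta,\xi) \in (H^1_0 \cap H^2)(\BB_A,\alpha) \times (H^1_0 \cap H^2)(\BB_B,\alpha)$ satisfying $\eta_r(A) A^{n-1} a(A) = \xi_r(B) B^{n-1} a(B)$. Using $\AF u = a^{-1}\nabla\cdot(a\nabla u)$ and integrating by parts twice on each ball gives
\[
\int_{\BB_A}(\AF v)(\AF \eta)\,d\alpha - \int_{\BB_A}\eta\,\AF^2 v\,d\alpha = \beta_n\, a(A) A^{n-1}\, \AF v(A)\,\eta_r(A),
\]
and analogously on $\BB_B$. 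Substituting the bulk equations collapses the first variation to
\[
\beta_n\!\left[a(A) A^{n-1}\, \AF v(A)\,\eta_r(A) + a(B) B^{n-1}\, \AF w(B)\,\xi_r(B)\right] = 0,
\]
and letting the common value $a(A) A^{n-1}\eta_r(A) = a(B) B^{n-1}\xi_r(B)$ range over all real numbers forces $\AF v(A) + \AF w(B) = 0$.

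The main obstacle I expect is the careful bookkeeping for the double integration by parts on the weighted ball: one must verify that the drift term $\nabla\phi\cdot\nabla$ hidden in $\AF$ contributes no extra boundary terms beyond those displayed above, confirm that $\AF v$ admits a well-defined pointwise trace at $r = A$ (granted by the classical regularity extracted from $\AF^2 v = \mu v$), and check that the class of admissible radial variations is rich enough to separately isolate the interior PDE and the natural boundary condition. The hypothesis that either $n \geq 3$ or the ground state is radial enters only to ensure consistency with Proposition~\ref{Thm:RadMod} when restricting to the radial subspace in the first place.
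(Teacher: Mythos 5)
Your overall strategy --- direct method for existence, then first variation for the Euler--Lagrange system and natural boundary condition --- matches the paper's, and your integration-by-parts computation recovering $\AF v(A) + \AF w(B) = 0$ is the same as theirs. Where you differ is in how you obtain $H^2$-boundedness of the minimizing sequence. The paper appeals to the $H^2_0$-coercivity already established in Proposition~\ref{spect}, which does not directly apply because admissible competitors lie only in $H^1_0 \cap H^2$; to bridge the gap, the authors subtract off the polynomial correction $\tilde v_m(r) = v_m(r) - \bigl(\frac{r^2-A^2}{2A}\bigr)\frac{\partial v_m}{\partial r}(A)$ to land in $H^2_0$, and then devote a separate integration-by-parts argument to showing the scalars $\frac{\partial v_m}{\partial r}(A)$ stay bounded. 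You instead invoke the a priori estimate $\|v\|_{H^2(\BB_A,\alpha)} \leq C\|\AF v\|_{L^2(\BB_A,\alpha)}$ for $v \in H^1_0 \cap H^2$, obtained from unweighted elliptic regularity for the Dirichlet problem for $\AF$ plus injectivity (which, as you say, follows from the maximum principle, or even more elementarily from $\int v\,\AF v\,d\alpha = -\int |\nabla v|^2\,d\alpha$). This is a genuine simplification: it sidesteps the correction function and the separate derivative-boundedness argument entirely. Both routes rely on the domain being a smooth ball and $\phi$ being smooth.

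One gap: you never treat the degenerate cases $A = 0$ or $B = 0$, which the paper handles first and separately. When, say, $A = 0$, the compatibility condition \eqref{JAB bdry cond} forces $w'(B) = 0$, so the problem collapses to the clamped-plate Rayleigh quotient over radial $H^2_0(\BB_B,\alpha)$ functions, and the minimizer is a radial eigenfunction of $\Omega^{\#}$; it is here --- and not merely for ``consistency with the radial subspace,'' as you suggest at the end --- that the hypothesis ``$n \geq 3$ or the $n=2$ ground state is radial'' is actually invoked, since without it one cannot identify $J_{0,B}$ with $\Lambda_1(\Omega^{\#})$, an identification used downstream in establishing $C\leq 1$. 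You should address these boundary cases explicitly before assuming $A,B > 0$. A more minor point: weak $H^2$ convergence gives only weak convergence of the derivative traces in $H^{1/2}(\partial\BB_A)$; this suffices here because for radial functions the trace is a scalar multiple of the constant function, so weak convergence of the trace is convergence of that scalar --- but that reduction to the radial subspace deserves a sentence, as the general $H^2 \to C^1(\overline{\BB_A})$ statement does not hold for $n \geq 2$.
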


\begin{proof}
We first observe that if either $A$ or $B$ equals zero, then $J_{A,B}=\Lambda_1(\Omega^{\#})$ is achieved by any principal eigenfunction (which is radial by Proposition \ref{Thm:RadMod} for dimensions $n\geq3$, and by assumption for $n=2$). In what follows, we therefore assume that $A$ and $B$ are positive. We begin by noting that
\[
J_{A,B}=\inf_{(v,w)} \left \{ \int_{\BB_A}(\AF v)^2\,d\alpha+\int_{\BB_B}(\AF w)^2\,d\alpha \right \},
\]
where the $\inf$ is taken over all pairs of radial functions $(v,w)$ with $v\in H^1_0(\BB_A,\alpha)\cap H^2(\BB_A,\alpha), w\in H^1_0(\BB_B,\alpha)\cap H^2(\BB_B,\alpha)$ satisfying \eqref{JAB bdry cond} together with the normalization
\[
\int_{\BB_A}v^2\,d\alpha+\int_{\BB_B}w^2\,d\alpha=1.
\]
Let $(v_m,w_m)$ denote such a sequence of normalized functions where
\begin{equation}\label{conv to JAB}
\int_{\BB_A}(\AF v_m)^2\,d\alpha+\int_{\BB_B}(\AF w_m)^2\,d\alpha \to J_{A,B}.
\end{equation}
We claim that the sequences $v_m$ and $w_m$ are bounded in $H^2(\BB_A,\alpha)$ and $H^2(\BB_B,\alpha)$, respectively. Define a new sequence of radial functions on $\BB_A$ using the formula
\[
\tilde{v}_m(r)=v_m(r)-\left(\frac{r^2-A^2}{2A}\right)\frac{\partial v_m}{\partial r}(A),
\]
and observe that $\tilde v_m \in H^2_0(\BB_A,\alpha)$. From Proposition \ref{spect}, there exist positive constants $K_1$ and $K_2$ with
\begin{equation}\label{Ineq:bdness of tilde}
\|\tilde v_m\|^2_{H^2(\BB_A,\alpha)}\leq \frac{1}{K_1}\left( \int_{\BB_A}(\AF \tilde v_m)^2\,d\alpha+K_2\|\tilde v_m\|^2_{L^2(\BB_A,\alpha)}\right).
\end{equation}
If we can show that the derivatives $\frac{\partial v_m}{\partial r}(A)$ form a bounded sequence, then \eqref{Ineq:bdness of tilde} will give that $v_m$ are bounded in $H^2(\BB_A,\alpha)$. To establish boundedness of $\frac{\partial v_m}{\partial r}(A)$, observe that $\int_{\BB_A} \AF v_m \,dx$ are bounded. Writing $\AF$ in spherical coordinates, we see that
\[
\AF = \frac{1}{r^{n-1}}\frac{\partial }{\partial r}\left(r^{n-1}\frac{\partial}{\partial r}\right)+\frac{\partial \phi}{\partial r}\frac{\partial}{\partial r}+\frac{1}{r^2}\Delta_S=\frac{\partial^2}{\partial r^2}+\left(\frac{n-1}{r}+\frac{\partial \phi}{\partial r}\right)\frac{\partial}{\partial r}+\frac{1}{r^2}\Delta_S,
\]
where $\Delta_S$ denotes the spherical Laplacian. Integrating by parts, we then compute
\begin{align*}
\int_{\BB_A} \AF v_m \,dx&=\beta_n\int_0^A\frac{\partial}{\partial r}\left(r^{n-1}\frac{\partial v_m}{\partial r}\right)\,dr+\beta_n\int_0^A\frac{\partial \phi}{\partial r}\frac{\partial v_m}{\partial r}r^{n-1}\,dr\\
&=\beta_nA^{n-1}\frac{\partial v_m}{\partial r}(A)-\beta_n\int_0^Av_m\left(\frac{\partial^2 \phi}{\partial r^2}+\frac{n-1}{r}\frac{\partial \phi}{\partial r}\right)r^{n-1}\,dr.
\end{align*}
Because $\phi(r)$ is even, it follows that $\frac{\partial \phi}{\partial r}(0)=0$, and so the expression $\frac{\partial^2 \phi}{\partial r^2}+\frac{n-1}{r}\frac{\partial \phi}{\partial r}$ remains bounded over the interval $[0,A]$. Since $\int_{\BB_A}v_m\,dx$ are bounded, we conclude $\frac{\partial v_m}{\partial r}(A)$ are bounded as well.

Having established that the sequence $\|v_m\|_{H^2(\BB_A,\alpha)}$ is bounded, by Banach-Alaoglu we may pass to a subsequence and assume that $v_m\to v$ weakly for some $v\in H^2(\BB_A,\alpha)$. Similarly, we may assume $w_m \to w$ weakly for some $w\in H^2(\BB_B,\alpha)$. Starting with the inequality $\int_{\BB_A}\left( \AF(v_m-v)\right)^2\,d\alpha \geq 0$, we have
\[
\int_{\BB_A}\left(\AF v_m\right)^2\,d\alpha \geq 2\int_{\BB_A}\left(\AF v_m\right) \left(\AF v\right)\,d\alpha - \int_{\BB_A}\left(\AF v\right)^2\,d\alpha.
\]
Taking the $\liminf$ and using weak convergence, we see
\[
\liminf_{m\to \infty} \int_{\BB_A}\left(\AF v_m\right)^2\,d\alpha \geq \int_{\BB_A}\left(\AF v \right)^2\,d\alpha.
\]
We similarly deduce
\[
\liminf_{m\to \infty} \int_{\BB_B}\left(\AF w_m\right)^2\,d\alpha \geq \int_{\BB_B}\left(\AF w \right)^2\,d\alpha.
\]
Combining the two inequalities immediately above with \eqref{conv to JAB}, we have
\[
J_{A,B}\geq \int_{\BB_A}\left(\AF v \right)^2\,d\alpha + \int_{\BB_B}\left(\AF w \right)^2\,d\alpha.
\]
By the Rellich-Kondrachov Theorem, we may pass to further subsequences and assume
\[
v_m\to v\quad \textup{in }L^2(\BB_A,\alpha)\qquad \textup{and} \qquad w_m\to w\quad \textup{in }L^2(\BB_B,\alpha).
\]
It follows that
\[
\int_{\BB_A}v^2\,d\alpha+\int_{\BB_B}w^2\,d\alpha=1.
\]
By Part III of Theorem $6.3$ in \cite{AF03}, $H^2(\BB_A, \alpha)$ embeds compactly in $C^1(\overline{\BB_A})$ and similarly for the ball $\BB_B$. Hence $v$ and $w$ are radial and by passing to a subsequence, \eqref{JAB bdry cond} is satisfied, and $v\in H^1_0(\BB_A,\alpha)$, $w\in H^1_0(\BB_B,\alpha)$.

By the usual calculus of variations argument, $v$ and $w$ satisfy weak eigenvalue equations. Standard elliptic regularity results \cite[Theorem 2]{Nir55} thus imply that $v$ and $w$ are $C^{\infty}$ on $\mathbb{B}_A$ and $\mathbb{B}_B$, respectively. We now write $\mu=J_{A,B}$ and assume that $\psi \in C^2(\overline \BB_A)$, $\zeta \in  C^2(\overline \BB_B)$ are radial functions vanishing at $r=A$ and $r=B$, satisfying
\[
\frac{\partial \psi}{\partial r}(A)A^{n-1}a(A)=\frac{\partial \zeta}{\partial r}(B)B^{n-1}a(B).
\]
Then
\begin{align*}
0&=\int_{\BB_A}\left(\AF^2 v-\mu v\right)\psi\,d\alpha+\int_{\BB_B}\left(\AF^2w-\mu w\right)\zeta \,d\alpha\\
 &+\beta_n(\AF v)(A)\frac{\partial \psi}{\partial r}(A)A^{n-1}a(A)+\beta_n(\AF w)(B)\frac{\partial \zeta}{\partial r}(B)B^{n-1}a(B).
\end{align*}
Taking $\psi=0$ and $\zeta=0$ separately, we deduce $\AF^2v=\mu v$ and $\AF^2w=\mu w$ together with the natural boundary condition
\[
\AF v(A)+\AF w(B)=0.
\]
\end{proof}

\begin{remark}\label{Rmk:JABpos}
We note that $J_{A,B}>0$. To understand why this is the case, assume $J_{A,B}=0$ and take $(v,w)$ to be a minimizing pair of functions as in Proposition \ref{Prop:MinExt}. Since $J_{A,B}=0$, we have $\AF v=0$, so in particular $\int_{\BB_A}v\AF v\,dx=0$. Note that
\begin{equation}\label{Eqn:JP1}
0=\int_{\BB_A}v\AF v\,dx=\beta_n\int_0^Av\left(\frac{1}{r^{n-1}}\frac{\partial}{\partial r}\left(r^{n-1}\frac{\partial v}{\partial r}\right)+\frac{\partial \phi}{\partial r}\frac{\partial v}{\partial r}\right)r^{n-1}\,dr.
\end{equation}
Since $v(A)=0$, integration by parts gives
\begin{equation}\label{Eqn:JP2}
\int_0^Av\frac{\partial }{\partial r}\left(r^{n-1}\frac{\partial v}{\partial r}\right)\,dr=-\int_0^A\left(\frac{\partial v}{\partial r}\right)^2r^{n-1}\,dr.
\end{equation}
Integrating by parts once again and using that $v(A)=0$, we see
\[
\int_0^Av\frac{\partial \phi}{\partial r}\frac{\partial v}{\partial r}r^{n-1}\,dr=-\int_0^Av\left(\frac{\partial v}{\partial r}\frac{\partial \phi}{\partial r}+v\frac{\partial^2\phi}{\partial r^2}+\frac{n-1}{r}v\frac{\partial \phi}{\partial r}\right)r^{n-1}\,dr\\
\]
and so
\begin{equation}\label{Eqn:JP3}
\int_0^Av\frac{\partial \phi}{\partial r}\frac{\partial v}{\partial r}r^{n-1}\,dr=-\frac{1}{2}\int_0^Av^2\left(\frac{\partial^2\phi}{\partial r^2}+\frac{n-1}{r}\frac{\partial \phi}{\partial r}\right)r^{n-1}dr.
\end{equation}
Combining \eqref{Eqn:JP1}, \eqref{Eqn:JP2}, and \eqref{Eqn:JP3} we see
\[
0=\int_0^A\left(\frac{\partial v}{\partial r}\right)^2r^{n-1}\,dr+\frac{1}{2}\int_0^Av^2\left(\frac{\partial^2\phi}{\partial r^2}+\frac{n-1}{r}\frac{\partial \phi}{\partial r}\right)r^{n-1}dr.
\]
By assumption, $\phi$ is convex, and since $\phi(r)$ is even, we have $\frac{\partial \phi}{\partial r}(0)=0$. We conclude that $v=0$ which violates the normalization assumption $\int_{\BB_A}v^2\,d\alpha=1$. We deduce that $J_{A,B}$ must be positive.
\end{remark}

In the next theorem, we describe the form of the eigenfunctions of the ball and identify the ground state.

\begin{prop}\label{Thm:RadMod} Let $a(x)$ be as in Proposition \ref{Prop:Symm Rslt}. If $R>0$, then for all dimensions $n$, the solutions to the eigenvalue problem
\begin{equation}\label{Eqn:PDEProbBall1}
\begin{cases}
  \AF^2u=\Lambda u &\text{in $\BB_R$,}\\
  u=\frac{\partial u}{\partial r}=0 &\text{when $|x|=R$,}
 \end{cases}
 \end{equation}
 take the form
\[
u(r,\that)=y(r)Y_l(\that), \qquad r\in (0,R), \ \that \in \mathbb{S}^{n-1},
\]
where $Y_l$ is a spherical harmonic of order $l$ and $y$ is the radial function satisfying the radial eigenvalue problem 
\begin{equation}\label{Eqn:ODEy1}
 \begin{cases}\left(\frac{\partial^2}{\partial r^2}+\left(\frac{n-1}{r}+\frac{\partial \phi}{\partial r}\right)\frac{\partial}{\partial r}-\frac{l(l+n-2)}{r^2}\right)^2y=\Lambda y \qquad\text{when $r\in(0,R)$,}\\
  y(R)=y'(R)=0.
 \end{cases}
\end{equation}

Furthermore, for all dimensions $n\geq 3$ the fundamental mode of the ball $\BB_R$ corresponds to $l=0$ and is purely radial.
\end{prop}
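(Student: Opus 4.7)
The plan has two halves, matching the two assertions of the proposition.

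\textbf{Step 1 (separation of variables).} Since $a(x)=e^{\phi(|x|)}$ is radial, the operator $\AF$ commutes with rotations. Writing
\[
\AF = \frac{\partial^2}{\partial r^2} + \left(\frac{n-1}{r} + \phi'(r)\right)\frac{\partial}{\partial r} + \frac{1}{r^2}\Delta_S
\]
with $\Delta_S Y_l = -l(l+n-2) Y_l$, a separated function $u(r,\that)=y(r)Y_l(\that)$ satisfies $\AF u = (L_l y)\, Y_l$ with
\[
L_l := \frac{d^2}{dr^2} + \left(\frac{n-1}{r}+\phi'\right)\frac{d}{dr} - \frac{l(l+n-2)}{r^2},
\]
and iterating, $\AF^2 u = (L_l^2 y)\,Y_l$. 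Thus $u$ solves \eqref{Eqn:PDEProbBall1} iff $y$ solves \eqref{Eqn:ODEy1}. Because $\AF^2$ commutes with rotations and (by Proposition \ref{spect}) has finite-dimensional eigenspaces, completeness of spherical harmonics on $\Ss^{n-1}$ yields an orthonormal basis of each eigenspace consisting of separated eigenfunctions of the stated form.

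\textbf{Step 2 (radiality of the fundamental mode for $n\geq 3$).} For each $l\geq 0$, let $\Lambda_1^{(l)}$ denote the lowest eigenvalue of \eqref{Eqn:ODEy1}. The plan is to show
\[
\Lambda_1^{(0)} < \Lambda_1^{(l)} \qquad \text{for all } l\geq 1 \text{ when } n\geq 3.
\]
Let $y$ be a mode-$l$ ground state, smooth on $[0,R]$ with $y\sim r^l$ near $0$ and $y(R)=y'(R)=0$. Since $y$ is admissible for the mode-$0$ problem, it suffices to show $R_0[y]<R_l[y]=\Lambda_1^{(l)}$, where $R_l[y]$ denotes the radial Rayleigh quotient in the measure $r^{n-1}a(r)\,dr$. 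Writing $L_l=L_0-V$ with $V:=l(l+n-2)/r^2\geq 0$,
\[
(L_0 y)^2 - (L_l y)^2 = 2 V y \, L_0 y - V^2 y^2.
\]
Using the self-adjoint form $r^{n-1}a\cdot L_0 y = (r^{n-1} a \, y')'$ and integrating by parts twice against $r^{n-1}a(r)\,dr$, I expect the integral of the right-hand side to reduce to
\[
-\left[ (c+2(n-4))\int_0^R y^2 r^{n-5} a\, dr + 2\int_0^R y^2 \phi' r^{n-4} a\,dr + 2\int_0^R (y')^2 r^{n-3} a\,dr \right],
\]
where $c=l(l+n-2)$. For $l\geq 1$ and $n\geq 3$ one has $c\geq n-1$, so $c+2(n-4)\geq 3n-9\geq 0$; convexity and evenness of $\phi$ force $\phi'\geq 0$ on $[0,\infty)$; and the last integral is strictly positive unless $y\equiv 0$. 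Hence the displayed quantity is strictly negative, giving $R_0[y]<R_l[y]$, so $\Lambda_1^{(0)}\leq R_0[y]<\Lambda_1^{(l)}$.

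\textbf{Main obstacle.} The subtle point is the double integration by parts and the corresponding boundary-term check at $r=0$. The mode-$l$ decay $y\sim r^l$ makes $Vyy'r^{n-1}a$ and $y^2 r^{n-4}a$ vanish at the origin precisely when $n+2l-4>0$, i.e., exactly when $n\geq 3$ and $l\geq 1$ (failing in the borderline case $n=2,\ l=1$, consistent with the dimensional restriction in the statement). Carefully justifying the regularity of $y$ at the origin so the integrations by parts are legitimate, and then keeping track of signs using convexity and evenness of $\phi$, is where the actual work lies; once those are in hand, the algebra is straightforward.
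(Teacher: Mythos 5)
Your proposal matches the paper's strategy: separate variables (via commutativity of $\AF^2$ with $\Delta_S$ together with the finite multiplicity from Proposition~\ref{spect}), then isolate the $l$-dependent part of the Rayleigh-quotient numerator, integrate by parts, and check that both the boundary terms at $r=0$ and the resulting integrand sign force $n\geq 3$ in the critical case $l=1$. Two small remarks. First, an overall factor of $c$ has been dropped from your displayed formula; carried through correctly, the right-hand side reads
\[
-\Bigl[\,c\bigl(c+2(n-4)\bigr)\!\int_0^R y^2\,r^{n-5}a\,dr \;+\; 2c\!\int_0^R y^2\phi'\,r^{n-4}a\,dr \;+\; 2c\!\int_0^R (y')^2\,r^{n-3}a\,dr\,\Bigr],
\]
which does not affect the sign analysis since $c>0$, and indeed $c+2(n-4)\geq 0$ for $l\geq 1$, $n\geq 3$ (with equality at $l=1$, $n=3$). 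Second, your extra integration by parts on the $yy'$ term is precisely what introduces the $\phi'$ integral, so your argument also relies on $\phi'\geq 0$ (which holds by convexity and evenness of $\phi$). The paper instead stops after a single integration by parts (on $yy''$) and completes the square, at which point the $\phi'$ contributions cancel and one is left with $\int_0^R\bigl(\tfrac{k^2-2k}{r^4}y^2+\tfrac{2k}{r^4}(y-ry')^2\bigr)r^{n-1}a\,dr$, whose nonnegativity follows from $k(k-2)\geq 0$ alone without invoking log-convexity. Either version gives the conclusion; the paper's is marginally cleaner in that it does not consume the convexity hypothesis at this step.
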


\begin{remark} In dimension $n=2$, we expect the fundamental mode of the ball also corresponds to $l=0$, but we cannot prove this claim with our methods. The work in the following proof shows that $N[u]$ is increasing in $l$ for $l\geq1$ and all $n\geq 2$, and hence the ground state is obtained when either $l=0$ or $l=1$. For many drum and plate eigenvalue problems, eigenfunctions with more nodes of the same type correspond to higher eigenvalues. In the anti-Gaussian setting, eigenfunctions corresponding to the lowest $l=0$ eigenvalue have no angular nodes, while eigenfunctions corresponding to the lowest $l=1$ eigenvalue have one angular node. We also have numerical evidence that the $n=2$ fundamental mode corresponds to $l=0$; see Figure~\ref{fig:n=0,l=0,1} in Section~\ref{Section:Ball}.
\end{remark}

\begin{proof} 
We start by considering the numerator of the Rayleigh quotient, which we denote $N[u]$:
\[
N[u]:=\int_\Omega(\AF u)^2\,d\alpha.
\]
We observe that the operator $\Delta_S$ commutes with $\AF$ and hence with $\AF^2$. By Proposition~\ref{spect}, each eigenvalue $\Lambda$ has finite multiplicity and so its corresponding eigenspace $X_\Lambda\subseteq H^2_0(\BB_R,\alpha)$ is finite-dimensional. Thus, by the standard argument, the operators $\AF^2$ and $\Delta_S$ are simultaneously diagonalizable. The spherical harmonics $Y_l(\that)$ are eigenfunctions of the spherical Laplacian, so we may write the eigenfunctions of problem \eqref{Eqn:PDEProbBall1} in the form $u(r,\that)=y(r)Y_l(\theta)$. Expressing $\AF$ in spherical coordinates, the eigenvalue problem \eqref{Eqn:PDEProbBall1} reduces to \eqref{Eqn:ODEy1}.

We may assume the $Y_l$ are normalized over $L^2(\Ss^{n-1},\HH^{n-1})$ and the eigenfunction $u$ is normalized over $L^2(\BB_R,\alpha)$. Writing $k=l(l+n-2)$, we have $\Delta_SY_l=-k Y_l$. In the following calculations, we make use of shorthand notation, writing $y'(r)$ in place of $\partial y/\partial r$. We therefore have
\[
 N[u]=\int_0^R\int_{\Ss^{n-1}}\left(y''+\left(\frac{n-1}{r}+\phi'\right) y'-\frac{k}{r^2}y\right)^2 (Y_l)^2\,d\HH^{n-1}\,r^{n-1}e^{\phi(r)}\,dr.
\]
By our chosen normalizations, the angular portion of the integral simplifies to $1$, and so we are left with
\[
 N[u]=\int_0^R\left(y''+\left(\frac{n-1}{r}+\phi'\right) y'-\frac{k}{r^2}y\right)^2\,r^{n-1}e^{\phi(r)}\,dr.
\]
We next investigate the range of $k$-values for which $N[u]$ is increasing. For this investigation, we need only consider the terms involving $k$, namely
\begin{equation}\label{eqn:kstuff}
 \int_0^R\left(-\frac{2k}{r^2}yy''-\frac{2k}{r^3}\left(n-1+r\phi'\right) yy'+\frac{k^2}{r^4}y^2\right)\,r^{n-1}e^{\phi(r)}\,dr. 
\end{equation}
Using integration by parts, we find
\begin{align*}
  \int_0^R-2kyy''r^{n-3}e^{\phi(r)}\,dr&=\left.\left(-2kr^{n-3}e^{\phi(r)}yy'\right)\right|_{0}^R\\
  &\qquad+2k\int_0^R\left(\frac{(y')^2}{r^2}+\frac{yy'}{r^3}\Big(n-3+r\phi'\Big)\right)r^{n-1}e^{\phi(r)}\,dr.
\end{align*}
In order to have smoothness of $u$ at the origin, we must have $y'(0)=0$ when $l=0$ and $y(0)=0$ when $l\geq 1$, and so the boundary terms from integration by parts vanish at $r=0$. The remaining boundary term vanishes by the clamped boundary conditions $y(R)=y'(R)=0$. Thus expression \eqref{eqn:kstuff} becomes
\begin{align*}
&\int_0^R\left(\frac{k^2}{r^4}y^2-\frac{2k}{r^3}\Big(n-1+r\phi'\Big)yy'+\frac{2k(y')^2}{r^2}+\frac{2k}{r^3}\Big(n-3+r\phi'\Big)yy'\right)r^{n-1}e^{\phi(r)}\,dr\\
 &=\int_0^R\left(\frac{k^2-2k}{r^4}y^2+\frac{2k}{r^4}(y-ry')^2\right)r^{n-1}e^{\phi(r)}\,dr,
\end{align*}
by completing the square. Since all terms involving $y$ are now squared, it is clear this quantity is increasing in $k$ for all $k\geq 1$. Since $k=l(l+n-2)$, it follows that $N[u]$ is increasing in $l$ when $l\geq1$.

If $l=0$, then $k=0$ and the terms involving $k$ all vanish. If $l=1$, then $k=n-1$ and so $k^2-2k=(n-1)(n-3)$, which is nonnegative because our dimension $n\geq 3$. Therefore, $N[u]$ is increasing in $l$ for all $l\geq 0$ and dimensions $n\geq 3$. Since we assumed $u$ was a normalized eigenfunction, $N[u]$ is an eigenvalue, and so our eigenvalue will be minimized when $l=0$.
\end{proof}

\section{Confluent Hypergeometric Functions}\label{Sec:CHF}
In this section we collect some relevant properties of confluent hypergeometric  functions. They appear in the radial part of solutions to both the anti-Gauss clamped plate eigenvalue problem on a ball and the $J_{A,B}$ minimization problem. Unless stated otherwise, all properties in this section can be found in \cite[Ch. 13]{NIST}. See \cite[Ch. 13]{AShandbook} for more information.

The confluent hypergeometric  functions are a class of special functions defined as those solving Kummer's equation:
\[
 z \frac{d^2w}{dz^2}+(b-z)\frac{dw}{dz}-aw=0.
\]
The standard solutions are the Kummer functions $M(a,b,z)$ and $U(a,b,z)$. Here $M$ is the function given by the power series
\[
 M(a,b,z)=\sum_{k=0}^\infty\frac{(a)_k}{(b)_kk!}z^k,
\]
where $(a)_k=a(a+1)\cdots(a+k-1)$. The function $M$ is not defined for negative integer values of $b$, but in this paper we will only consider positive values of this parameter.

Generally, the second linearly independent solution is taken to be the function $U$ uniquely defined by the property
\[
 U(a,b,z)\sim z^{-a},\qquad z\to\infty,\qquad |\mathrm{ph}z|\leq \frac{3\pi}{2}-\delta,
\]
where $\mathrm{ph}z$ is the phase of $z$ (as a complex number). However, since we will be most interested in $z$ near the origin, we will instead take our second solution to be a function $\UT$ so that $M(a,b,z)$ and $\UT(a,b,z)$ are a so-called ``numerically satisfactory pair'' near the origin (see \cite[Sec 2.7(iv)]{NIST}) -- that is, they are linearly independent and do not exhibit a large amount of cancellation near $z=0$.

Our choice of $\UT$ depends on the values of $a$ and $b$, following \cite[Sec 13, p.323-324]{NIST}. When $b\geq0$ is a noninteger, we take
\[
 \UT(a,b,z)=z^{1-b}M(a-b+1,2-b,z).
\]
When $b$ is a positive integer, and $a-b+1\neq 0,-1,-2,\dots$, we take
\[
 \UT(a,b,z)=\sum_{k=1}^{b-1}\frac{(b-1)!(k-1)!}{(b-k-1)!(1-a)_k}z^{-k}-\sum_{k=0}^\infty\frac{(a)_k}{(b)_kk!}z^k\left(\ln(z)+\psi(a+k)-\psi(1+k)-\psi(b+k)\right),
\]
where $\psi(x)=\Gamma'(x)/\Gamma(x)$, the logarithmic derivative of the $\Gamma$ function. 

If instead $a=1,2,\dots, b-1$, then
\[
 \UT(a,b,z)=\sum_{k=a}^{b-1}\frac{(k-1)!}{(b-k-1)!(k-a)!}z^{-k}.
\]
If we have $a=0,-1,-2,\dots$, then
\begin{align*}
\UT(a,b,z)&=\sum_{k=1}^{b-1}\frac{(b-1)!(k-1)!}{(b-k-1)!(1-a)_k}z^{-k}+(-1)^{1+|a|}(|a|)!\sum_{k=1+|a|}^\infty\frac{(k-1+a)!}{(b)_kk!}z^k\\
 &\qquad-\sum_{k=0}^{|a|}\frac{(a)_k}{(b)_kk!}z^k\left(\ln(z)+\psi(1-a-k)-\psi(1+k)-\psi(b+k)\right).
\end{align*}
We will not be considering negative values for $b$ in this paper, so we do not need to address $\UT$ in this case.

Note that for all values of $a$ and $b$ under consideration, the function $M$ is entire and real-valued when $z\in\RR$. However, the function $\UT$ may be singular at the origin, and complex-valued when $z$ is a negative real. We take the principal branches of $\ln(z)$ and powers of $z$, with the usual branch cuts along the negative real axis.

\subsection*{Asymptotics and roots}\label{Sect:Asymp}
All asymptotics given below are taken as $z\to0$.

For all choices of $a\in\RR,b>0$, we have
\begin{equation*}
M(a,b,z)=1+\frac{a}{b}z+\BigO(z^2).
\end{equation*}
If $b>0$ is not an integer, 
\begin{equation}\label{UT0}
\UT(a,b,z)=z^{1-b}\left(1+\frac{b-a-1}{b-2}z+\BigO(z^2)\right).
\end{equation}
When $b$ is a positive integer, our asymptotics depend on $a$. If $b\geq2$ and $a=1,2,\dots,b-1$, then

\begin{numcases}{ \UT(a,b,z)=}
             \frac{(b-2)!}{(b-a-1)!}z^{1-b}\left(1+\frac{b-a-1}{b-2}z+\BigO(z^{2})\right), & $b-a\geq 3$, \label{UT1}\\
               (b-2)!z^{1-b}+(b-3)!z^{2-b}, &$b-a=2$,\label{UT2}\\
               (b-2)!z^{1-b}, &$b-a=1$. \label{UT3}
               \end{numcases}
For $b\geq 2$ and all other values of $a$, we have
\begin{numcases}{\UT(a,b,z)=}
\frac{(b-1)!(b-2)!}{(1-a)_{b-1}}z^{1-b}\left(1+\frac{b-a-1}{b-2}z+\BigO(z^{2})\right), &$b\geq 4$,\label{UT4}\\
 \frac{2}{(1-a)(2-a)}z^{-2}+\frac{2}{1-a}z^{-1}+\BigO(\ln(z)), &$b=3$,\label{UT5}\\
\frac{1}{(1-a)}z^{-1}-\ln(z)+\BigO(1), &$b=2$.\label{UT6}
\end{numcases}
If $b=1$, then
\begin{numcases}{\UT(a,b,z)=}
   -\ln(z)-\psi(1-a)+2\psi(1)-a z\ln(z)+\BigO(z), &$a=0,-1,-2,\dots,$ \label{UT7}\\
   -\ln(z)-\psi(a)+2\psi(1)-a z\ln(z)+\BigO(z), &\text{otherwise}. \label{UT8}
\end{numcases}

We will also need properties of roots of $M(a,b,z)$. Let $p(a,b)$ denote the number of positive real zeros of $M(a,b,z)$ and $n(a,b)$ denote the number of negative real zeros of $M(a,b,z)$. 
Then for all $b\geq0$ and $a\in\RR$, 
\begin{align*}
 p(a,b)&=\begin{cases}\lceil|a|\rceil &\text{if $a< 0$,}\\
          0 &\text{if $a\geq0$,}
         \end{cases}
         \end{align*}
and  
         \begin{align}\label{NegZeroes}
 n(a,b)&=p(b-a,b)=\begin{cases}\lceil a-b\rceil &\text{if $b<a$,}\\
          0 &\text{if $b\geq a$.}
         \end{cases}
             \end{align}
Note in particular that when $b\geq a$, the function $M(a,b,z)$ has no negative zeros.

\section{Eigenfunctions of the ball in anti-Gauss space}\label{Section:Ball}
In this section, we shall find the eigenfunctions of $\AF^2$ on balls using a separation of variables approach.

\begin{prop}\label{ballspcprop} Let $a(x)=e^{|x|^2/2}$ denote an anti-Gaussian weight. If $R>0$ and $\BB_R=\{x\in \RR^n:|x|<R\}$, then the solutions to the eigenvalue problem
\begin{equation}\label{Eqn:PDEProbBall}
\begin{cases}
  \AF^2u=\Lambda u &\text{in $\BB_R$,}\\
  u=\frac{\partial u}{\partial r}=0 &\text{when $|x|=R$,}
 \end{cases}
 \end{equation}
take the form
\begin{equation}\label{balleigenfcn}
 u(r,\that)=A Y_l(\that)\left(M\left(\frac{l+\lambda}{2},\frac{n}{2}+l,-\frac{r^2}{2}\right)+G_R M\left(\frac{l-\lambda}{2},\frac{n}{2}+l,-\frac{r^2}{2}\right)\right)
\end{equation}
with corresponding eigenvalues
\[
 \Lambda=\lambda^2.
\]
Here the functions $Y_l(\that)$ are spherical harmonics of order $l$, the constant $G_R$ is chosen so that $u(R,\that)\equiv 0$, and the constant $\lambda$ is taken to satisfy the boundary condition $\partial u/\partial r=0$ at $r=R$.
\end{prop}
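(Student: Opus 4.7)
The plan is to build on Proposition \ref{Thm:RadMod}, which already establishes the separation of variables $u(r,\that) = y(r)\,Y_l(\that)$ and reduces \eqref{Eqn:PDEProbBall} to the radial ODE \eqref{Eqn:ODEy1}. For the anti-Gaussian weight we have $\phi(r) = r^2/2$, hence $\phi'(r) = r$, and the radial operator specializes to
\[
\mathcal{L} := \frac{\partial^2}{\partial r^2} + \left(\frac{n-1}{r} + r\right)\frac{\partial}{\partial r} - \frac{l(l+n-2)}{r^2},
\]
with the eigenvalue equation taking the form $\mathcal{L}^2 y = \Lambda y$.

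The first step is to factor. Writing $\Lambda = \lambda^2$ with $\lambda \geq 0$, the identity $\mathcal{L}^2 - \Lambda = (\mathcal{L} - \lambda)(\mathcal{L} + \lambda)$ reduces the fourth-order problem to the pair of second-order equations $\mathcal{L} y = \pm \lambda y$. Whenever $\lambda \neq 0$, the kernels of $\mathcal{L} \mp \lambda$ are each two-dimensional with trivial intersection (since $\mathcal{L} y = \lambda y$ and $\mathcal{L} y = -\lambda y$ together force $y = 0$), so their direct sum accounts for all four dimensions of the fourth-order solution space.

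Next I would convert each of $\mathcal{L} y = \pm \lambda y$ to Kummer's equation. The substitution $z = -r^2/2$, combined with stripping off an $r^l$ prefactor to absorb the centrifugal term at the origin, transforms $\mathcal{L} y = \pm \lambda y$ into
\[
z f''(z) + \left(\tfrac{n}{2} + l - z\right) f'(z) - \tfrac{l \mp \lambda}{2}\, f(z) = 0,
\]
i.e., Kummer's equation with parameters $b = \tfrac{n}{2} + l$ and $a = (l \mp \lambda)/2$. Of its two linearly independent solutions, only $M\bigl((l \mp \lambda)/2,\, n/2 + l,\, -r^2/2\bigr)$ is regular at the origin; the asymptotics for $\UT$ collected in Section \ref{Sec:CHF} show that the second independent solution would introduce either a negative power of $r$ or a logarithm, violating the smoothness of $u$ required by Proposition \ref{spect}.

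Superposing the two regular solutions (one for each sign of $\lambda$) produces the two-parameter family of radial functions displayed in \eqref{balleigenfcn}. The clamped boundary condition $u(R,\that) = 0$ then pins down the ratio $G_R$ of the coefficients, and the remaining condition $\partial u/\partial r = 0$ at $r = R$ becomes a transcendental equation in $\lambda$ that selects the admissible values, giving the discrete spectrum $\Lambda = \lambda^2$. The main technical obstacle is the change-of-variables bookkeeping: one has to verify that the drift term $\phi'(r) = r$ and the centrifugal term $-l(l+n-2)/r^2$ conspire to produce Kummer's equation with exactly the stated parameters, with no residual terms. Beyond that, the rest is a routine regularity-at-the-origin argument and the imposition of the clamped boundary conditions.
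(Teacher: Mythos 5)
Your factorization and change of variables match the paper's approach exactly, and the direct-sum dimension count you give for reducing the fourth-order problem to the two second-order problems $\AF_r y = \pm\lambda y$ is a clean way to organize that step. The clamped-boundary bookkeeping at the end is also in line with what the paper does (though you should note, as the paper does via the zero count \eqref{NegZeroes}, that $\Mm(-R^2/2) \neq 0$ so that $G_R$ is well defined).

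The gap is in the step where you discard the $\UT_\pm$ contributions. You assert that since each $\UT_\pm$ is singular or logarithmic at $z=0$, ``only $M$ is regular at the origin'' and therefore the regular solutions of the fourth-order equation are exactly the superpositions $C_1 r^l \Mp + C_2 r^l \Mm$. This does not follow: the general fourth-order solution is $C_1 r^l \Mp + C_2 r^l \Mm + C_3 r^l \UT_+ + C_4 r^l \UT_-$, and one must rule out the possibility that a nontrivial combination $C_3 r^l \UT_+ + C_4 r^l \UT_-$ is itself regular by \emph{cancellation} of the leading singular terms. In fact such leading-order cancellation does occur for suitable $C_3, C_4$; it is only after passing to subleading terms in the expansions (and, in the low-dimensional edge cases, invoking the additional conditions $y(0)=0$ for $l\geq 1$ or $y'(0)=0$ for $l=0$) that one is forced to $C_3=C_4=0$. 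This is precisely the content of the paper's Appendix, which is a lengthy case analysis on the parity of $n$, the value of $b = n/2 + l$, and whether $a_\pm = (l\pm\lambda)/2$ are integers; calling it ``a routine regularity-at-the-origin argument'' seriously understates what needs to be checked. A shorter route you might consider, which avoids most of the case analysis: since the eigenfunction $u$ is smooth on $\overline{\BB_R}$, so is $\AF u = (\AF_r y)Y_l$, hence $(\AF_r \mp \lambda)y$ is itself a regular radial profile; but $(\AF_r - \lambda)y = -2\lambda\bigl(C_2 r^l \Mm + C_4 r^l \UT_-\bigr)$ and $(\AF_r + \lambda)y = 2\lambda\bigl(C_1 r^l \Mp + C_3 r^l \UT_+\bigr)$, which isolates each $\UT$ term and shows directly (since $r^l \UT_\pm$ is never smooth at the origin) that $C_3 = C_4 = 0$. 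Either way, as written your proposal asserts the conclusion rather than proving it.
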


\begin{remark} When dimension $n$ and order $l$ are fixed, we will use the shorthand notation
\[
 M_\pm(z)=M\left(\frac{l\pm \lambda}{2},\frac{n}{2}+l,z\right)\qquad\text{and}\qquad M_\pm'(z)= M'\left(\frac{l\pm\lambda}{2},\frac{n}{2}+l,z\right).
\]
\end{remark}

\begin{proof} 
By Proposition~\ref{Thm:RadMod}, eigenfunctions of the ball take the form $u=y(r)Y_l(\hat\theta)$, where $Y_l$ is a spherical harmonic of order $l$, and $y$ satisfies the ODE problem
\begin{equation}\label{Eqn:ODEy}
 \begin{cases}\left(\frac{\partial^2}{\partial r^2}+\left(\frac{n-1}{r}+r\right)\frac{\partial}{\partial r}-\frac{l(l+n-2)}{r^2}\right)^2y=\Lambda y \qquad\text{when $r\in(0,R)$,}\\
  y(R)=y'(R)=0.
 \end{cases}
\end{equation}
We will show that $y$ has the form of \eqref{balleigenfcn}.

For the sake of convenience, we write
\[
\AF_r=\frac{\partial^2}{\partial r^2}+\left(\frac{n-1}{r}+r\right)\frac{\partial}{\partial r}-\frac{l(l+n-2)}{r^2},
\]
so that the ODE above can be written as $\AF_r^2y=\Lambda y$. Writing $\Lambda=\lambda^2$, we factor this ODE as 
\[
 (\AF_r-\lambda)(\AF_r+\lambda)y=0.
\]
The solutions will then be linear combinations of the solutions of $\AF_ry=\pm\lambda y$, that is,
\begin{equation}\label{factoredpm}
 y''+\left(\frac{n-1}{r}+r\right)y'-\frac{l(l+n-2)}{r^2}y=\pm\lambda y.
\end{equation}
If we make the substitutions $y(r)=r^lw(-r^2/2)$ and $z=-r^2/2$, the above equation can be transformed into the Kummer differential equation
\[
 z\frac{d^2w}{dz^2}+\left(\frac{n+2l}{2}-z\right)\frac{dw}{dz}-\frac{l\pm\lambda}{2}w=0.
\]
The solutions to these equations are the confluent hypergeometric  Kummer functions $M((l\pm\lambda)/2,n/2+l,z)$ and $\UT((l\pm\lambda)/2,n/2+l,z)$. We thus have that the solutions to \eqref{factoredpm} have the general form
\[
 y(r)=r^l\left(C_1 M\left(\frac{l\pm\lambda}{2},\frac{n}{2}+l,-\frac{r^2}{2}\right)+C_2\UT\left(\frac{l\pm\lambda}{2},\frac{n}{2}+l,-\frac{r^2}{2}\right)\right),
\]
and so the solutions to $\AF^2y=\Lambda y$ are
\begin{align}\label{Eqn:ydecomp}
 y(r)&=r^l\Big(C_1 M(l/2+\lambda/2,n/2+l,-r^2/2)+ C_2M(l/2-\lambda/2,n/2+l,-r^2/2) \\
 &\qquad+C_3\UT(l/2+\lambda/2,n/2+l,-r^2/2)+ C_4\UT(l/2-\lambda/2,n/2+l,-r^2/2)\Big),\nonumber
\end{align}
where $C_1,C_2,C_3,C_4$ are (possibly complex-valued) constants.

Recall from our regularity result in Proposition~\ref{spect} that eigenfunctions are smooth on $\BB_R$. However, the Kummer functions $\UT$ are potentially singular at the origin, while the $M$ are not. Furthermore, because the eigenfunctions are smooth and $Y_l(\that)$ changes sign for $l\geq 1$, we must have $y(0)=0$. When $l=0$, the spherical harmonic $Y_0$ is constant and we require the radial function satisfy $y'(0)=0$ to ensure smoothness. A delicate case by case analysis using this information shows that the Kummer functions $\UT$ must be removed from the linear combination \eqref{Eqn:ydecomp}; details are carefully presented in the Appendix.

We next impose the clamped boundary conditions $y(R)=y'(R)=0$. Since $l/2-\lambda/2-(n/2+l)=-(\lambda+n+l)/2$ is negative, by \eqref{NegZeroes} the function $\Mm(-r^2/2)=M(l/2-\lambda/2,n/2+l,-r^2/2)$ has no roots for $r>0$. Thus $C_1\neq 0$ and the condition $y(R)=0$ can be rewritten as
\begin{equation}\label{Eqn:DefOfGR}
\frac{C_2}{C_1}=\frac{-\Mp(-R^2/2)}{\Mm(-R^2/2)}=:G_R.
\end{equation}
By rescaling, we assume
\[
y(r)=r^l \Big( \Mp(-r^2/2)+G_R\Mm(-r^2/2)\Big ).
\]
Imposing the final boundary condition $y'(R)=0$ allows us to determine $\lambda$. Note that
\[
y'(r)=lr^{l-1}\Big(\Mp(-r^2/2)+G_R\Mm(-r^2/2)\Big)-r^{l+1}\Big(\Mpd(-r^2/2)+G_R\Mmd(-r^2/2)\Big),
\]
and so
\[
0=y'(R)=\frac{l}{R}y(R)-R^{l+1}\Big(\Mpd(-R^2/2)+G_R\Mmd(-R^2/2)\Big),
\]
or equivalently
\begin{equation}\label{Eqn:DefOfh}
0=\Mpd(-R^2/2)\Mm(-R^2/2)-\Mmd(-R^2/2)\Mp(-R^2/2)=:h_R(\lambda).
\end{equation}
\end{proof}

\begin{remark}
We can use the function $h_R(\lambda)$ to numerically compute the eigenvalues of the ball $\BB_R$. If $\lambda$ is a root of $h_R(\lambda)$ for a given $R$, then $\Lambda(\BB_R)=\lambda^2$ is an eigenvalue. This makes it quite easy to numerically verify that the conclusion of Proposition~\ref{Thm:RadMod} still holds true for dimension $n=2$ in the anti-Gaussian setting. Figure~\ref{fig:n=0,l=0,1} shows that when the solution curves of $h_R(\lambda)=0$ are plotted in the $(R,\lambda)$ plane, for each $R$, the smallest $\lambda$ corresponding to $l=0$ is less than that for $l=1$. 

\begin{figure}[h!]
 \begin{center}
  \includegraphics[width=2in]{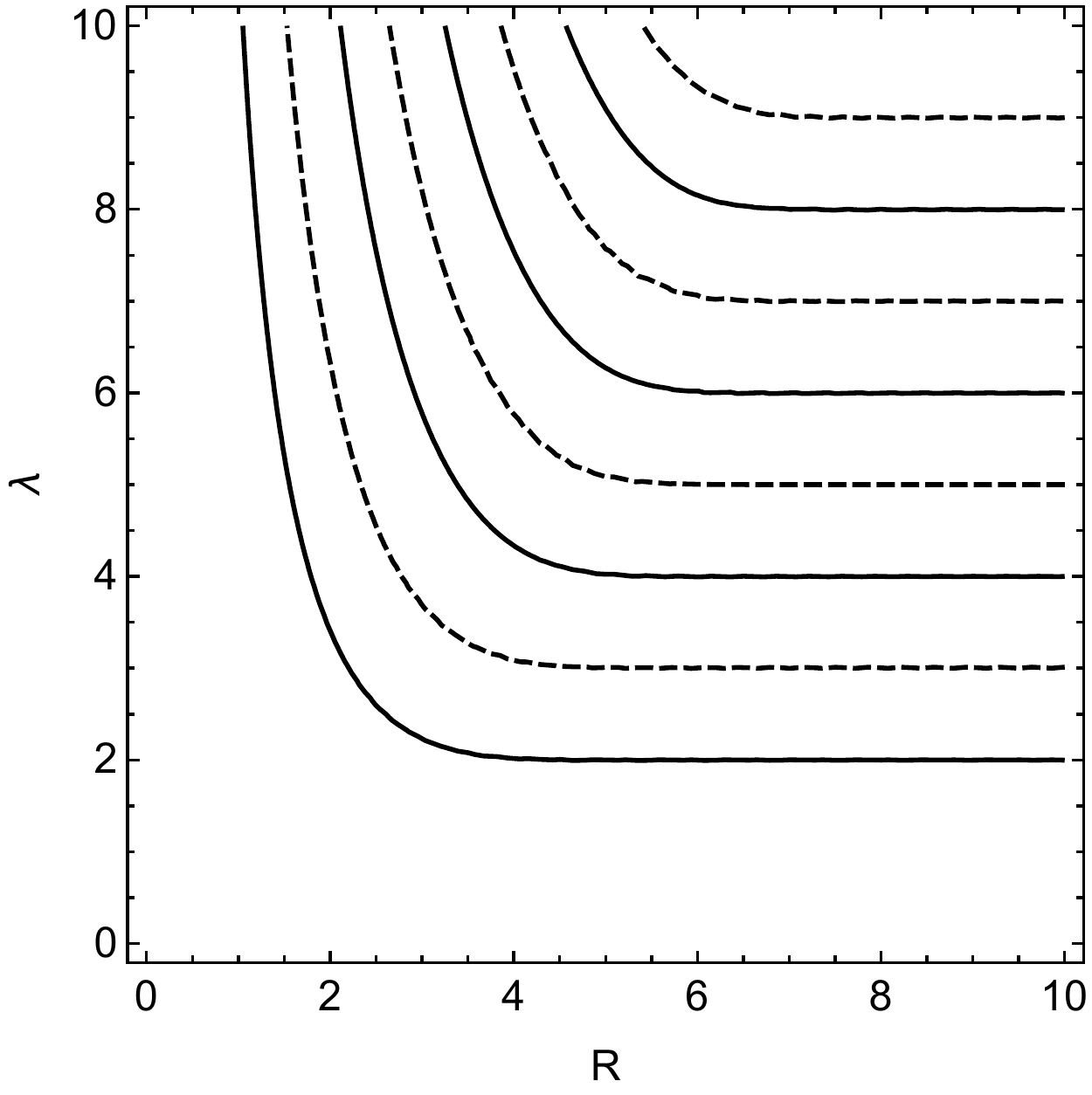}
  \caption{\label{fig:n=0,l=0,1} The solution curves of $h_R(\lambda)=0$ for $l=0$ (solid) and $l=1$ (dashed).}
 \end{center}
\end{figure}
 
\end{remark}

\section{The Euler-Lagrange system.}\label{Section:ELSoln}
We now revisit the $J_{A,B}$ minimization problem of Section \ref{Sect:ProofMainRes}, considering the anti-Gaussian weight $a(x)=e^{|x|^2/2}$. Recall the notation $\BB_A=\{x\in\RR^n:|x|<A\}$ and
\begin{equation}\label{Jabprob}
 J_{A,B}=\inf_{(v,w)} \ \frac{\int_{\BB_A}(\AF v)^2\,d\alpha+\int_{\BB_B}(\AF w)^2\,d\alpha}{\int_{\BB_A}v^2\,d\alpha+\int_{\BB_B}w^2\,d\alpha},
 \end{equation}
where the $\inf$ ranges over all pairs of radial functions $(v,w)$ satisfying
\begin{align*}
&v\in H^2(\BB_A,\alpha)\cap H^1_0(\BB_A,\alpha),\quad w\in H^2(\BB_B,\alpha)\cap H^1_0(\BB_B,\alpha),\\
&A^{n-1}e^{A^{2}/2}\frac{\partial v}{\partial r}(A)=B^{n-1}e^{B^{2}/2}\frac{\partial w}{\partial r}(B).
 \end{align*}

Recall that in Remark \ref{Rmk:JABpos} we showed $J_{A,B}>0$. Also, for each pair $(v,w)$, the single-variable dependence allows us to write $\AF v$ and $\AF w$ in terms of partial derivatives with respect to $r$. Thus we will simply write $\AF v = v''+((n-1)/r+r)v'$ throughout this section.

A standard calculus of variations argument shows that a minimizing pair $(v,w)$ of $J_{A,B}$ satisfies the following Euler-Lagrange system:
\begin{numcases}{}
 &$\AF^2 v=\mu v \qquad \textup{on}\quad \{r<A\}$,\nonumber\\
 &$\AF^2 w=\mu w \qquad \textup{on}\quad \{r<B\}$,\nonumber\\
 &$v(A)=w(B)=0$,\label{ELfixed}\\
 &$A^{n-1}e^{A^2/2}v'(A)=B^{n-1}e^{B^2/2}w'(B),$\label{ELderiv}\\
 &$\AF v(A)+\AF w(B)=0$.\label{ELAcond}
\end{numcases}
The smallest eigenvalue $\mu$ satisfying this system is $J_{A,B}$. The solutions to the system can be expressed in terms of confluent hypergeometric functions; relevant properties of these functions appear in Section~\ref{Sec:CHF}.

\subsection*{Solving the Euler-Lagrange system}
From our work in Section~\ref{Section:Ball}, we already know that radial solutions to $\AF^2 v=\mu v$ on the ball $\BB_A$ take the form
\[
 v(r)=C_1\Mp(-r^2/2)+ C_2\Mm(-r^2/2)+C_3{\UT_{+}}(-r^2/2)+C_4{\UT_{-}}(-r^2/2),
\]
where we recall the shorthand notation
\[
M_\pm(z)=M\left(\pm\lambda/2,n/2,z\right)\qquad\text{and}\qquad \UT_\pm(z)=\UT \left(\pm\lambda/2,n/2,z\right),
\]
where $\lambda=\sqrt{\mu}$. As we remarked in the proof of Proposition \ref{Prop:MinExt}, minimizers $(v,w)$ for $J_{A,B}$ are smooth on their respective domains. Using the series expansions for $M_\pm$ and $\UT_\pm$, our work in the Appendix (considering only cases where $l=0$) shows that $C_3$ and $C_4$ must vanish to ensure regularity of $v$. Analogous considerations apply to $w$.

We next impose the boundary conditions of our Euler-Lagrange system. Recall that in \eqref{Eqn:DefOfGR} we defined 
\[
G_R=\frac{-\Mp(-R^2/2)}{\Mm(-R^2/2)};
\]
the definition of $G_R$ came by requiring $y(R)=0$ in \eqref{Eqn:ODEy}. The Euler-Lagrange system includes the analogous condition \eqref{ELfixed}, and so we take $v,w$ to be
\begin{align*}
 v(r)&=C_v\Big(\Mp(-r^2/2)+G_A\Mm(-r^2/2)\Big),\\
 w(r)&=C_w\Big(\Mp(-r^2/2)+G_B\Mm(-r^2/2)\Big),
\end{align*}
where $C_v,C_w$ are constants which are not both zero. We impose the remaining Euler-Lagrange conditions to find a relationship between these constants and hence a condition on the eigenvalue $\mu$. Using the differential equations that $M_\pm$ satisfy together with the definition of $G_R$, we see that condition \eqref{ELAcond} becomes
\begin{equation}\label{Eqn:MatrixCoeff1}
 0=2\lambda\Big(C_v\Mp(-A^2/2)+C_w\Mp(-B^2/2)\Big).
\end{equation}
Our final boundary condition is \eqref{ELderiv}:
\begin{equation}\label{Eqn:MatrixCoeff2}
 A^{n}e^{A^2/2}C_v\Big(\Mpd(-A^2/2)+G_A\Mmd(-A^2/2)\Big)=B^{n}e^{B^2/2}C_w\Big(\Mpd(-B^2/2)+G_B\Mmd(-B^2/2)\Big).
\end{equation}
Viewing \eqref{Eqn:MatrixCoeff1} and \eqref{Eqn:MatrixCoeff2} as a system of equations, we must have a nontrivial solution in $C_v,C_w$. The corresponding determinate must vanish, and using the definitions of $G_R$ and $h_{\lambda}$ in \eqref{Eqn:DefOfh}, a bit of algebra leads to the condition
\begin{equation}
A^ne^{A^2/2} h_A(\lambda)\Mp(-B^2/2)\Mm(-B^2/2)+B^ne^{B^2/2}h_B(\lambda)\Mp(-A^2/2)\Mm(-A^2/2)=0. \label{Eqn:JABequation} 
\end{equation}
This equation determines the value of $\mu$ in terms of $A$, $B$, and $n$. It is this condition that we use to numerically compute values of $\mu=J_{A,B}$.

\section{Numerics of the anti-Gaussian constants $C(R,n)$}\label{Sect:NumLowBds}

Our numerical values for the constants $C(R,n)$ in anti-Gauss space are plotted in Figure~\ref{fig:const} for low dimensions. Here, we briefly summarize our methodology for computing these values. For each dimension $n$ and volume $V$, the constant $C(V,n)$ is defined as in \eqref{eqn:Cdef}, as the ratio of the minimal $J_{A,B}$ over $\Lambda_1(\BB_R)$. The constant $C(R,n)$ is obtained by taking $V=\alpha(\BB_R)$. We compute the eigenvalues $\Lambda_1(\BB_R)$ by numerically solving the equation $h_R(\lambda)=0$, where $h$ is defined as in \eqref{Eqn:DefOfh}; the smallest positive solution $\lambda$ of this equation equals $\sqrt{\Lambda_1(\BB_R)}$. We estimate $J_{A,B}$ numerically by solving \eqref{Eqn:JABequation} for a each pair of numbers $(A,B)$, and we minimize over such pairs $(A,B)$. By symmetry, we need only consider the values of $A$ ranging from $0$ to $A^*(R)$, where $A^*(R)$ satisfies $2\alpha(\BB_{A^*(R)})=\alpha(\BB_R)$, i.e., when $A=B$. 

Our numerics show that for all dimensions $n\geq 2$, the constants $C(R,n)\geq 0.85$ for all radii $R$, with $C(R,n)\to1$ as $R\to\infty$. We also see that (up to machine rounding error), the constant $C(R,2)=1$ for $0<R\leq1.23$ and $C(R,3)=1$ for $0<R\leq0.5$.
For such values of $R$, we see that $J_{A,B}$ is minimized when all of the mass is ``pushed'' into a single ball. For larger values of $R$, the minimizing $(A,B)$ pair appears to occur either at or very close to $A=B$, so that $J_{A,B}$ is minimized when mass is evenly distributed across two balls of equal radius. These phenomena are illustrated in Figure~\ref{fig:Jabmin23}. The transition between the two regimes occurs abruptly at $R\approx 1.24$ for dimension $n=2$ and $R\approx0.6$ for dimension $n=3$.

For higher dimensions and all radii, the minimizing $(A,B)$ pair also appears to occur at or near two balls of equal radius. Taken in sum, our observations curiously mirror the Euclidean case. There, in dimensions $n=2$ and $3$, Ashbaugh and Benguria \cite{AB95} showed that the Euclidean $J_{A,B}$ is minimized when a single ball has all the mass, and in dimensions $n\geq 4$, Ashbaugh and Laugesen \cite{AL96} showed that the Euclidean $J_{A,B}$ is minimized by two balls of equal radius.

\begin{figure}
 \includegraphics[width=3in]{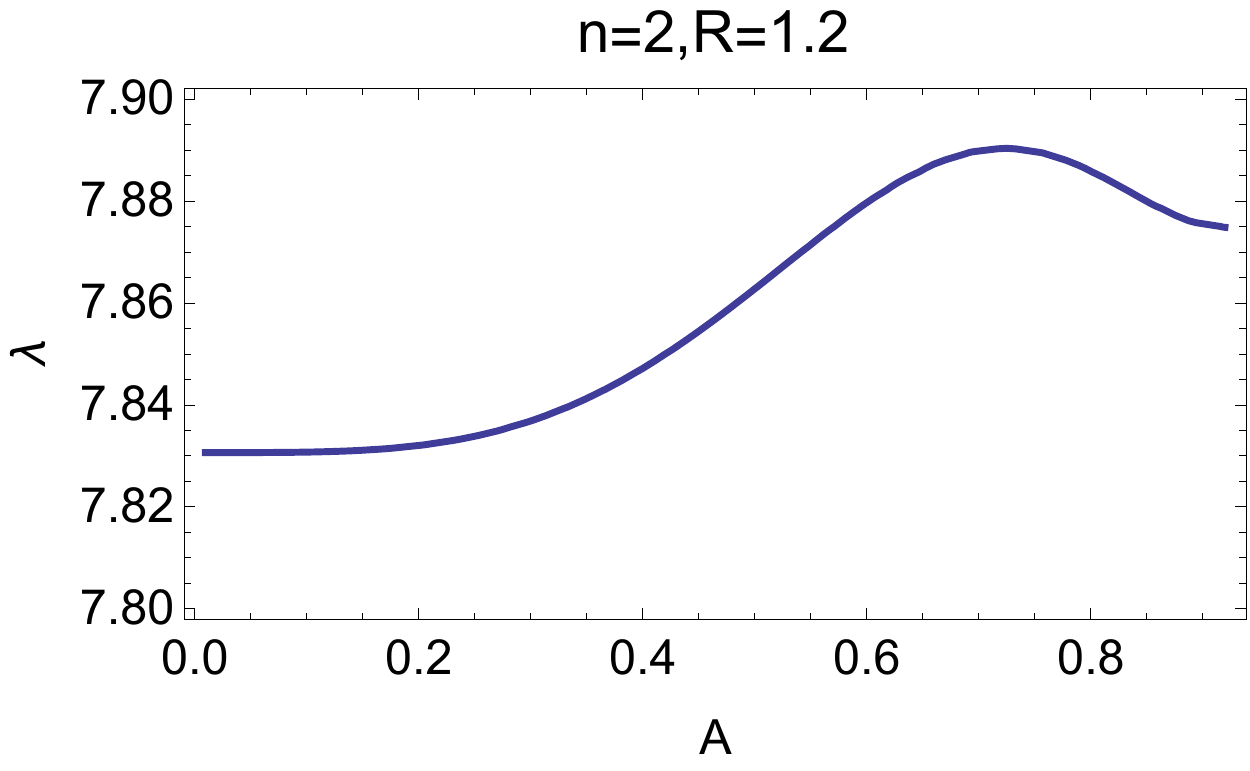}\qquad\includegraphics[width=3in]{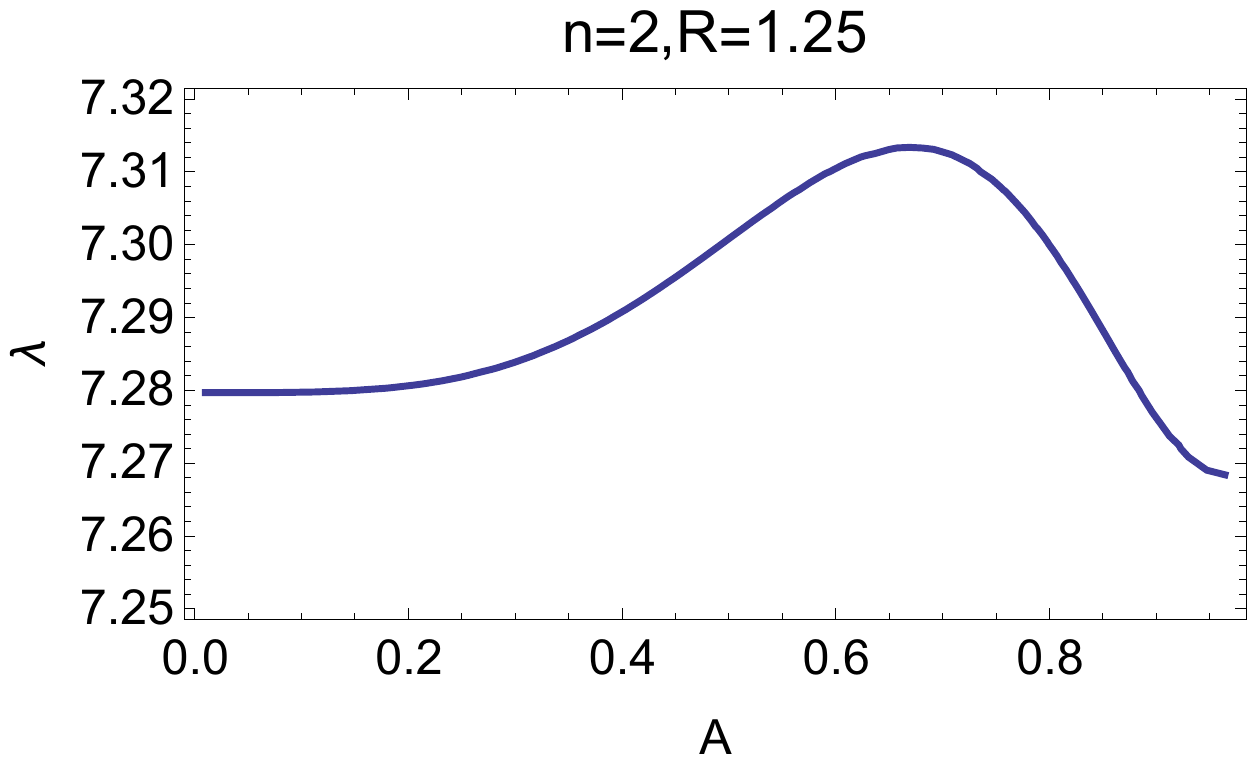}\\
 \includegraphics[width=3in]{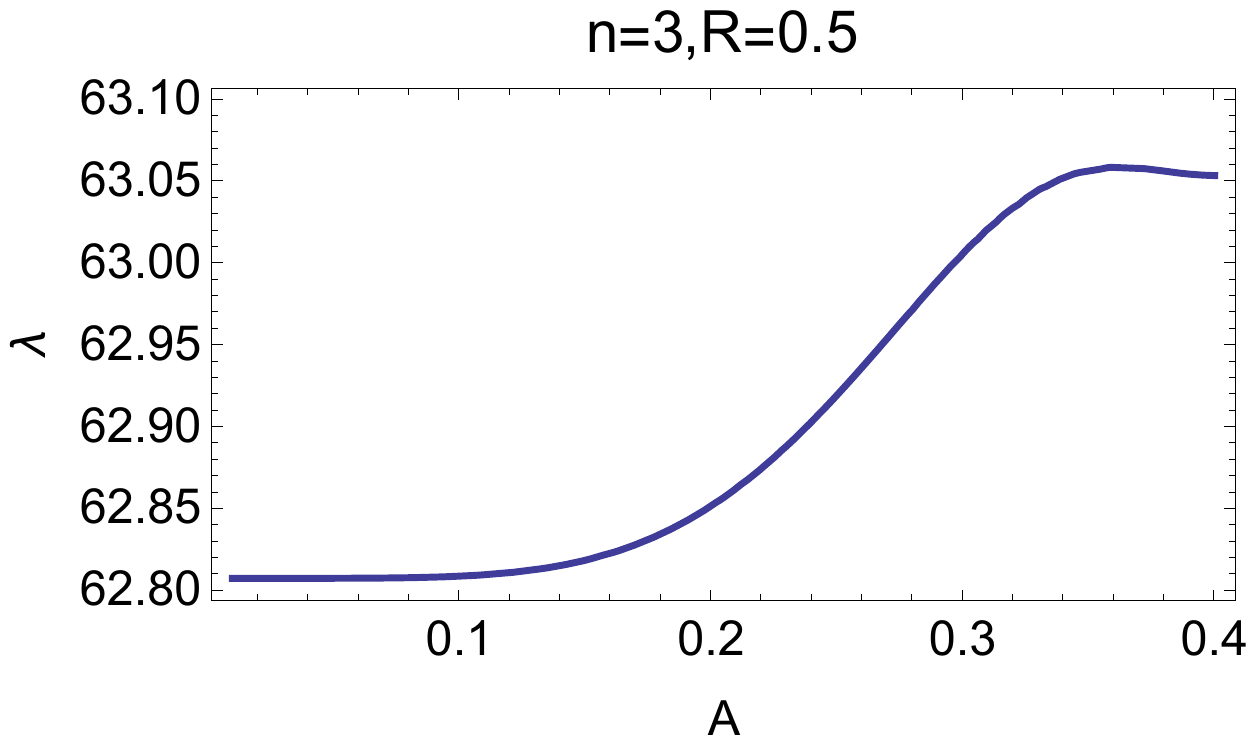}\qquad\includegraphics[width=3in]{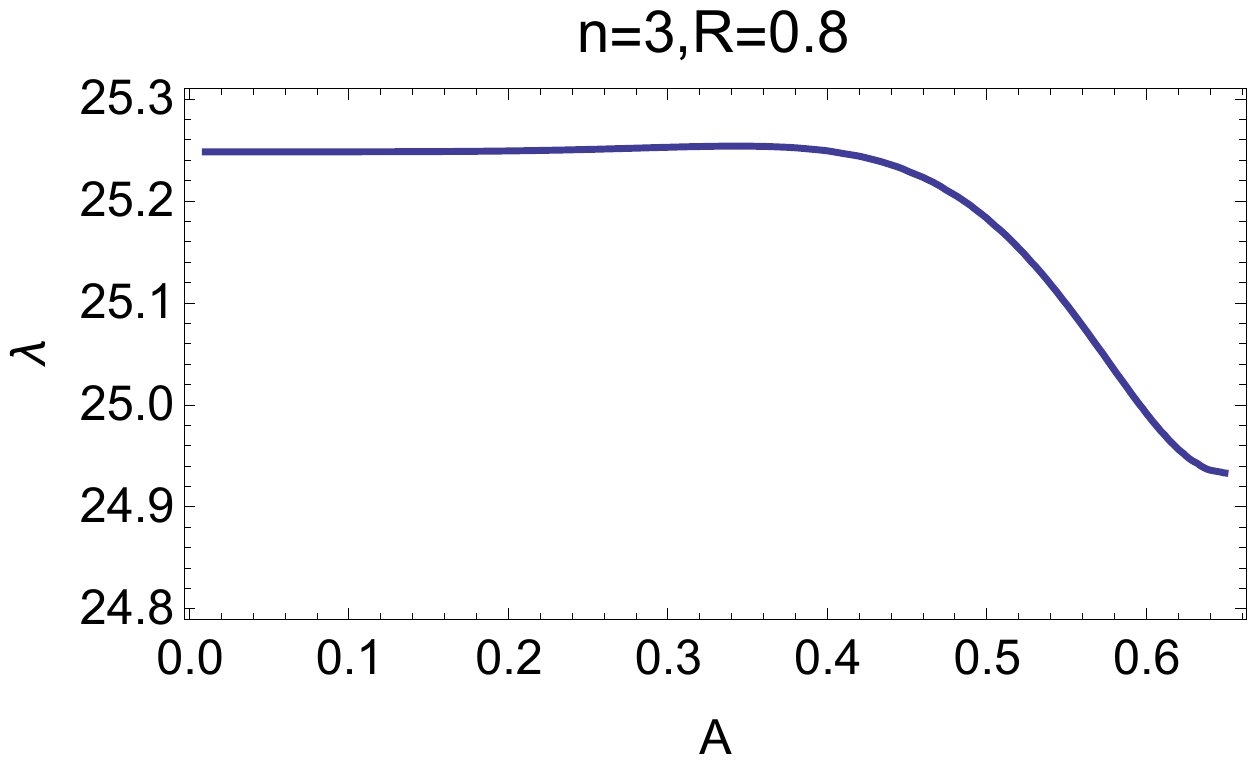}
 \caption{\label{fig:Jabmin23} Graphs of $\lambda=\sqrt{J_{A,B}}$ as a function of $A$ for $R=1.2$ and $R=1.25$ in dimension $n=2$, and $R=0.5$ and $R=0.8$ in dimension $n=3$. The parameter $A$ ranges from zero to $A^*(R)$. Note the location of the minimizer changes from $A\approx 0$ (all mass on one ball) to $A\approx A^*(R)$ (mass distributed evenly across two balls).}
\end{figure}

\begin{figure}
 \includegraphics[width=3in]{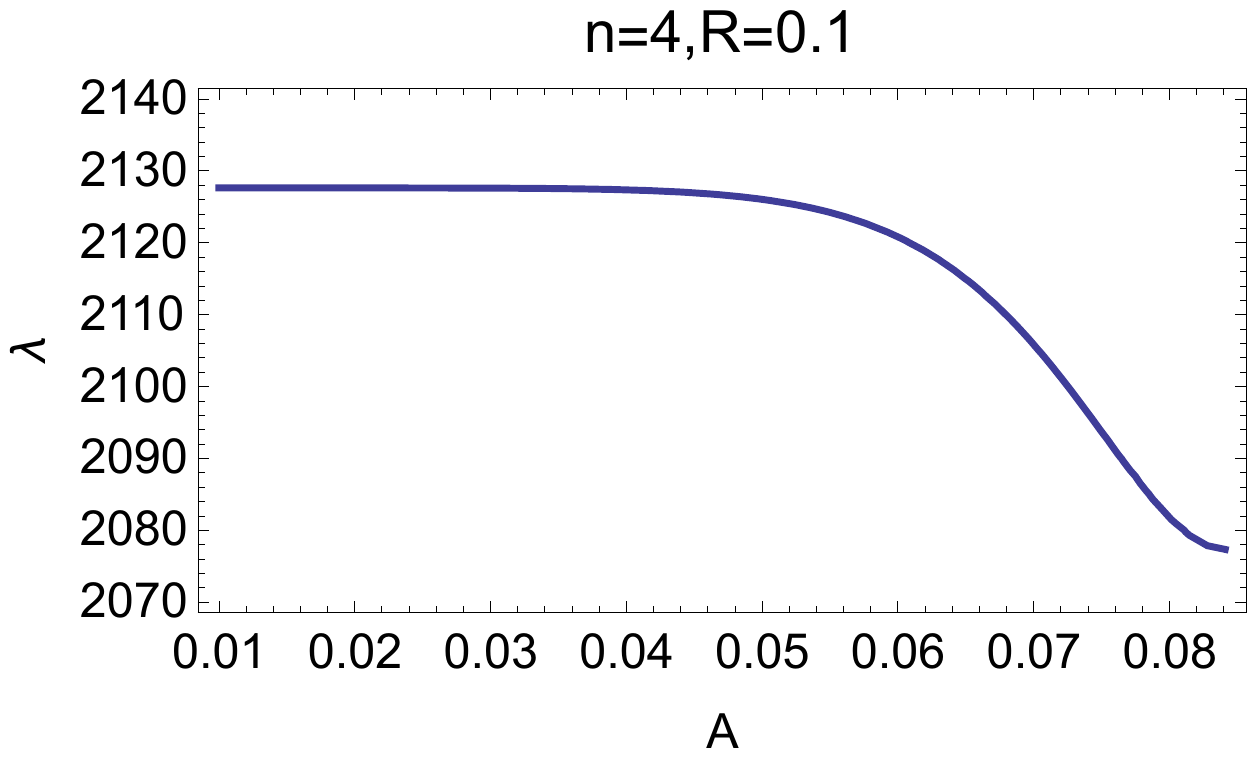}\qquad\includegraphics[width=3in]{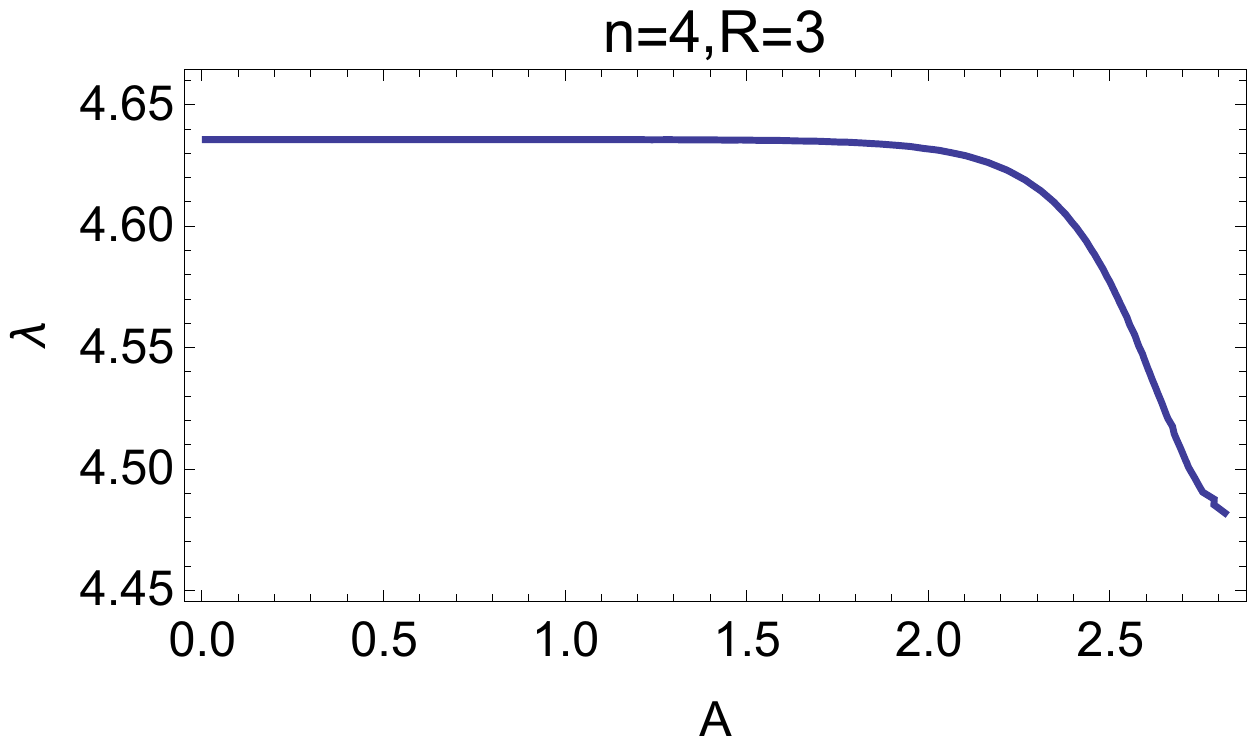}
 \caption{\label{fig:Jabmin4} Graphs of $\lambda=\sqrt{J_{A,B}}$ as a function of $A$ for $R=0.1$ and $R=3$ for dimension $n=4$. Note that the location of the minimizer occurs at  $A\approx A^*(R)$ (mass distributed evenly across two balls).}
\end{figure}

\section*{Appendix}
For the purposes of this appendix, we return our attention to Proposition \ref{ballspcprop}. We saw in the proof of this result that solutions to  \eqref{Eqn:PDEProbBall} separate as
\[
u(r,\hat \theta)=y(r)Y_l(\hat \theta),
\]
and that $y(r)$ decomposes as in \eqref{Eqn:ydecomp}. The goal of this appendix is to show that $y(r)$ consists only of linear combinations of Kummer functions $M_\pm$ and not $\UT_\pm$. Our arguments hinge on two simple observations. First, when viewed as a function on $\BB_R$, $y(r)$ is smooth. In particular, $y(r)$ is smooth at $r=0$. Second, $y'(0)=0$ whenever the parameter $l=0$, while $y(0)=0$ whenever the parameter $l\geq1$.

The work of our appendix also applies (by considering only cases with $l=0$) to the minimizing pair $(v,w)$ of Proposition \ref{Prop:MinExt}, and shows that these functions likewise exhibit a decomposition in terms of $M_\pm$.

We shall make use of the following shorthand notation:
\[
\UT_\pm(z)=\UT \left(\frac{l\pm\lambda}{2},\frac{n}{2}+l,z\right),\qquad a_\pm=\frac{l\pm\lambda}{2},
\]
and we frequently refer to the linear combinations
\begin{equation}\label{Eqn:MPMlincomb}
 r^l\Big(C_1\Mp(-r^2/2)+C_2\Mm(-r^2/2)\Big),
\end{equation}
\begin{equation}\label{Eqn:UPMlincomb}
r^l\left(C_3\UT_+(-r^2/2)+C_4\UT_-(-r^2/2)\right),
\end{equation}
consisting of the first and final two terms of \eqref{Eqn:ydecomp}. Our work heavily relies on the asymptotic formulas of Section \ref{Sect:Asymp}. We divide our work by the parity of the dimension.

Note that since our eigenvalues $\Lambda$ and $J_{A,B}$ are all positive, we have $\lambda>0$ and hence $a_+>a_-$ regardless of dimension and value of $l$.

\subsection*{Odd dimensions}
Since our dimension $n\geq3$ is odd, we have that $b=n/2+l$ is a noninteger. Regardless of the values of $a_\pm$, the functions $\UT_\pm$ obey \eqref{UT0}. To eliminate $r^{2-n-l}$ from \eqref{Eqn:UPMlincomb}, we  take $C_4=-C_3$ and \eqref{Eqn:UPMlincomb} becomes
\begin{equation}\label{Appnodd}
C_3r^{4-n-l}\left(-\frac{1}{2}\right)^{1-b}\left(\frac{\lambda}{2b-4}+\BigO(r^2)\right).
\end{equation}
This final expression remains singular at $r=0$ if $4-n-l<0$ unless $C_3=0$. This condition on $n$ and $l$ is met when $n\geq5$ for all $l\geq0$ or $n=3$ for $l\geq2$. 

If $n=3$ and $l=1$, then $r^{4-n-l}=1$ and $b=5/2$, and no singularities remain in \eqref{Appnodd}. However, in this case, smoothness of $y(r)$ also requires $y(0)=0$. We note that \eqref{Eqn:UPMlincomb} becomes
\[
C_3\left(-\frac{1}{2}\right)^{-3/2}\left(\lambda+\BigO(r^2)\right),
\]
while \eqref{Eqn:MPMlincomb} is $\BigO(r)$, and so to ensure $y(0)=0$, we must have $C_3=0$.

If $n=3$ and $l=0$, then $r^{4-n-l}=r$ and $b=3/2$, and no singularities remain in \eqref{Appnodd}; this time, smoothness additionally requires $y'(0)=0$. However, we have linearity in the $\UT_\pm$ contributions to \eqref{Eqn:UPMlincomb}, which becomes
\[
C_3r\left(-\frac{1}{2}\right)^{-1/2}\left(-\lambda+\BigO(r^2)\right),
\]
while the $M_\pm$ terms contribute only even powers of $r$ to \eqref{Eqn:MPMlincomb}, which becomes
\[
C_1+C_2+\frac{2}{3}\Big(C_1a_++C_2a_-\Big)r^2+\BigO(r^4).
\]
Thus, regardless of $C_1$ and $C_2$, $y'(0)=0$ forces $C_3=0$.

\subsection*{Even dimensions}
For even dimensions $n\geq 2$, the parameter $b=n/2+l\geq 1$ is an integer, so our choice of $\UT$ depends on $a$. Note that $a_++a_-=l$ is an integer, so $a_+$ is an integer if and only if $a_-$ is an integer. We first assume $a_\pm$ are nonintegers; the form of the asymptotics of $\UT_\pm$ now depend on $b$.

If $b=1$, then $n=2$ and $l=0$ so we have $a_\pm=\pm\lambda/2$. Since $a_\pm$ are nonintegers, both $\UT_\pm$ obey \eqref{UT8}. Thus to eliminate $r^l\ln(-r^2/2)$ from \eqref{Eqn:UPMlincomb}, we must choose $C_4=-C_3$, and \eqref{Eqn:UPMlincomb} becomes
\[
C_3\left(\phi(a_-)-\phi(a_+)+\frac{\lambda}{2}r^2\ln(-r^2/2)+\BigO(r^2)\right).
\] 
In order for the function above to be smooth at $r=0$, we require $C_3=0$.

 If $b=2$, then $l\leq 1$ and both $\UT_\pm$ obey \eqref{UT6}. To eliminate $r^{l-2}$ from \eqref{Eqn:UPMlincomb}, we take $C_4=C_3\frac{1-a_-}{1-a_+}$, and \eqref{Eqn:UPMlincomb} becomes
\[
C_3\left(\left(\frac{1-a_-}{1-a_+}-1\right)r^{l}\ln(-r^2/2)+\BigO(r^l)\right).
\]
In order to preserve smoothness at $r=0$, we must choose $C_3=0$.

 If $b=3$, then $l \leq 2$ and $\UT_\pm$ obey \eqref{UT5}. To eliminate $r^{l-4}$ from \eqref{Eqn:UPMlincomb}, we take $C_4=-C_3\frac{(1-a_-)(2-a_-)}{(1-a_+)(2-a_+)}$, and \eqref{Eqn:UPMlincomb} becomes
\[
C_3\left(\frac{4\lambda}{(1-a_+)(2-a_+)}r^{l-2}+\BigO(r^l\ln(-r^2/2))\right).
\]
The function above remains singular at $r=0$ if $l<2$. If $l=2$, then \eqref{Eqn:MPMlincomb} contributes $r^2$ terms and higher to \eqref{Eqn:ydecomp}. Thus in order to ensure $y(0)=0$ we must have $C_3=0$.

If $b\geq 4$, then the asymptotics of $\UT_\pm$ obey \eqref{UT4}, and to eliminate $r^{2-2b+l}$ from \eqref{Eqn:UPMlincomb}, we take $C_4=-C_3\frac{(1-a_-)_{b-1}}{(1-a_+)_{b-1}}$, and so \eqref{Eqn:UPMlincomb} becomes
\[
C_3\frac{(b-1)!(b-2)!}{(1-a_+)_{b-1}}\left(-\frac{1}{2}\right)^{1-b}r^{4-2b+l}\left(\frac{\lambda}{2b-4}+\BigO(r^2)\right).
\]
Since $b\geq 4$, we have that $4-2b+l< 0$ and we must take $C_3=0$ to avoid a singularity at $r=0$.

We assume for the remainder of the appendix that $a_\pm \in \mathbb{Z}$. Our work is divided into cases based upon the value of $b$. 

If $b=1$, then since $b=n/2+l$, it must be that $n=2$, $l=0$, and $a_\pm=\pm\lambda/2$. Since $a_+>0$, $\UT_+$ obeys \eqref{UT8} while $\UT_{-}$ obeys \eqref{UT7}. In order to eliminate $\ln(-r^2/2)$ from \eqref{Eqn:UPMlincomb}, we must take $C_4=-C_3$, and so \eqref{Eqn:UPMlincomb} becomes
\[
C_3r^l\left(\phi(1-a_-)-\phi(a_+)+\frac{\lambda}{2} r^2 \ln(-r^2/2)+\BigO(r^2)\right),
\]
which requires $C_3=0$ to ensure smoothness at the origin.

 If $b=2$, then since $b=n/2+l$ and $n\geq2$, we must have $l\leq 1$. There are two cases.

\begin{itemize}
\item{Case 2.1: $a_+=1$.} In this case, $\UT_+$ obeys \eqref{UT3} while $\UT_-$ obeys $\eqref{UT6}$ since $a_-<a_+$. In order to eliminate $r^{l-2}$ from \eqref{Eqn:UPMlincomb}, we must take $C_4=-(1-a_-)C_3$, and so \eqref{Eqn:UPMlincomb} becomes
\[
C_3r^l(1-a_-)\left(\ln(-r^2/2)+\BigO(1)\right).
\]
In order to preserve smoothness at $r=0$, we are forced to choose $C_3=0$.
\item {Case 2.2: $a_+\neq 1$.} In this case, $\UT_+$ obeys \eqref{UT6}. We have two subcases.
\begin{itemize}
\item {Subcase 2.2.1: $a_-=1$.} In this subcase, $\UT_-$ obeys \eqref{UT3} and the argument proceeds  in a manner that parallels the $a_+=1$ case.
\item{Subcase 2.2.2: $a_-\neq1$.} In this subcase, $\UT_-$ obeys \eqref{UT6}, and in order to eliminate $r^{l-2}$ from \eqref{Eqn:UPMlincomb} we must take $C_4=-C_3$, which makes the linear combination identically zero.
\end{itemize}
\end{itemize}

If $b=3$, then since $b=n/2+l$, we must have $l\leq 2$. We again proceed by cases.
\begin{itemize}
\item{Case 3.1: $a_+=1$ {\normalfont or} $a_+=2$.} Here $\UT_+$ obeys either $\eqref{UT2}$ or $\eqref{UT3}$, depending on the value of $b-a_+$. Since $a_-<a_+$, we have two subcases.
\begin{itemize}
\item{Subcase 3.1.1: $a_-=1$ and $a_+=2$.}
In this subcase, $\UT_-$ is given by $\eqref{UT2}$ and $\UT_+$ is given by $\eqref{UT3}$. In order to eliminate $r^{l-4}$ from \eqref{Eqn:UPMlincomb}, we must take $C_4=-C_3$, and the linear combination \eqref{Eqn:UPMlincomb} becomes $-2C_3r^{l-2}$, and so we are forced to choose $C_3=0$ to preserve smoothness at the origin or to ensure $y(0)=0$.

\item{Subcase 3.1.2: $a_-<1$.} Here, $\UT_-$ obeys \eqref{UT5}. The linear combination \eqref{Eqn:UPMlincomb} must eliminate the $r^{l-4}$ term, and so we must choose $C_4=-C_3(1-a_-)(2-a_-)/2$. If $b-a_+=1$, then \eqref{Eqn:UPMlincomb} simplifies to
\[
C_4r^l\left(-\frac{4}{1-a_-}r^{-2}+\BigO(\ln(-r^2/2))\right).
\]
In order to be nonsingular at $r=0$ (when $l\leq 1$) or to ensure $y(0)=0$ (when $l=2$), we must take $C_4=0$. If $b-a_+=2$, then \eqref{Eqn:UPMlincomb} simplifies to
\[
C_3r^l\left( 2(1-a_-)r^{-2}+\BigO(\ln(-r^2/2))\right).
\]
Since $a_-<1$, we must have $C_3=0$ to ensure smoothness at $r=0$ when $l\leq 1$ or $y(0)=0$ when $l=2$.
\end{itemize}
\item{Case 3.2: $a_+<1$ {\normalfont or} $a_+>2$.} In this case, $\UT_+$ obeys \eqref{UT5}. We have two subcases.
\begin{itemize}
\item {Subcase 3.2.1: $a_-=1$ {\normalfont or} $a_-=2$.} This subcase is handled analogously to Subcase 3.1.2 above.

\item {Subcase 3.2.2: $a_-<1$ or $a_->2$.} Here $\UT_-$ also obeys \eqref{UT5}. In order to eliminate the singular term $r^{l-4}$ from \eqref{Eqn:UPMlincomb}, we choose $C_4=-\frac{(1-a_-)(2-a_-)}{(1-a_+)(2-a_+)}C_3$. Then \eqref{Eqn:UPMlincomb} becomes
\[
4C_3r^l\left(\frac{\lambda}{(1-a_+)(2-a_+)} r^{-2}+\BigO(\ln(-r^2/2))\right).
\]
Since $\lambda\neq0$, we see $C_3=0$ is necessary to make the coefficient of $r^{2-l}$ zero and eliminate the singularity at $r=0$ for $l\leq 1$ or to guarantee $y(0)=0$ for $l=2$.
\end{itemize}
\end{itemize}

Finally, suppose $b\geq 4$.
\begin{itemize}
\item {Case 4.1: $1\leq a_+ \leq b-1$.} Throughout this case, $\UT_+$ obeys one of \eqref{UT1}-\eqref{UT3} depending on the value of $b-a_+$.
\begin{itemize}
\item{Subcase 4.1.1: $1\leq a_- \leq b-1$.} In this subcase,  $\UT_-$ also obeys one of \eqref{UT1}-\eqref{UT3} depending on the value of $b-a_-$. Since $b-a_+<b-a_-$, we have four cases to consider. If $b-a_+=1$ and $b-a_-=2$, then we take $C_4=-C_3$ in \eqref{Eqn:UPMlincomb} to eliminate the singular term $r^{2-2b+l}$. Under this imposition \eqref{Eqn:UPMlincomb} becomes
\[
-C_3(b-3)!\left(-\frac{1}{2}\right)^{2-b}r^{4-2b+l}.
\]
Since $b\geq 4$ and $l<b$, we again have $4-2b+l<0$, and so we must choose $C_3=0$ to ensure smoothness at $r=0$. 

When $b-a_+=1$ and $b-a_-\geq 3$, we are forced to take $C_4=-(b-a_--1)!C_3$ in \eqref{Eqn:UPMlincomb} to eliminate $r^{2-2b+l}$, and so \eqref{Eqn:UPMlincomb} becomes
\[
-C_3(b-2)!\left(-\frac{1}{2}\right)^{1-b}r^{2-2b+l}\left(-\frac{1}{2}\left(\frac{b-a_--1}{b-2}\right)r^2+\BigO(r^4)\right),
\]
and arguing as before, we must take $C_3=0$ to preserve smoothness at $r=0$.

When $b-a_+=2$ and $b-a_-\geq 3$, $C_3$ and $C_4$ must be chosen to eliminate both $r^{2-2b+l}$ and $r^{4-2b+l}$ from \eqref{Eqn:UPMlincomb}, again to ensure smoothness when $r=0$. In this case, the constants $C_3$ and $C_4$ must satisfy the system
\begin{align*}
(b-2)!C_3+\frac{(b-2)!}{(b-a_--1)!}C_4&=0,\\
(b-3)!C_3+\frac{(b-3)!}{(b-a_--2)!}C_4&=0.
\end{align*}
This system has a nontrivial solution only when $b-a_--1=1$, which is impossible since $b-a_-\geq 3$. 

Finally, when $b-a_\pm \geq 3$, eliminating $r^{2-2b+l}$ and $r^{4-2b+l}$ from \eqref{Eqn:UPMlincomb} leads to the system of equations
\begin{align*}
\frac{(b-2)!}{(b-a_+-1)!}C_3+\frac{(b-2)!}{(b-a_--1)!}C_4&=0,\\
\frac{(b-3)!}{(b-a_+-2)!}C_3+\frac{(b-3)!}{(b-a_--2)!}C_4&=0,
\end{align*}
which has a nontrivial solution precisely when $b-a_+-1=b-a_--1$. This is impossible, since $a_+\neq a_-$.

\item{Subcase 4.1.2: $a_-<1$ or $a_->b-1$.} Throughout this subcase, $\UT_-$ is defined by \eqref{UT4}. If $b-a_+=1$, to ensure smoothness at $r=0$, the constants $C_3$ and $C_4$ of \eqref{Eqn:UPMlincomb} must satisfy
\begin{align*}
(b-2)!C_3+\frac{(b-1)!(b-2)!}{(1-a_-)_{b-1}}C_4=0,\\
\frac{(b-1)!(b-2)!}{(1-a_-)_{b-1}}\frac{b-a_--1}{b-2}C_4=0.
\end{align*}
This system has a nontrivial solution precisely when the coefficient of $C_4$ in the second equation is nonzero, which is impossible.

When $b-a_+=2$, to ensure smoothness of $y(r)$ at $r=0$, the constants $C_3$ and $C_4$ of \eqref{Eqn:UPMlincomb} must satisfy
\begin{align*}
(b-2)!C_3+\frac{(b-1)!(b-2)!}{(1-a_-)_{b-1}}C_4=0,\\
(b-3)!C_3+\frac{(b-1)!(b-2)!}{(1-a_-)_{b-1}}\frac{b-a_--1}{b-2}C_4=0.
\end{align*}
After a bit of algebra, we see that the above system has a nontrivial solution precisely when $b-a_-=2$, which is impossible.

Finally when $b-a_+\geq 3$, we make the same considerations to $y(r)$ at $r=0$ to the linear combination \eqref{Eqn:UPMlincomb} and are led to the system of equations
\begin{align*}
\frac{(b-2)!}{(b-a_+-1)!}C_3+\frac{(b-1)!(b-2)!}{(1-a_-)_{b-1}}C_4&=0,\\
\frac{(b-2)!}{(b-a_+-1)!}\frac{b-a_+-1}{b-2}C_3+\frac{(b-1)!(b-2)!}{(1-a_-)_{b-1}}\frac{b-a_--1}{b-2}C_4&=0,
\end{align*}
which admits a nontrivial solution precisely when $a_+=a_-$, which is again impossible.
\end{itemize}
\item {Case 4.2: $a_+<1$ {\normalfont or} $a_+>b-1$.} In this case, $\UT_+$ obeys \eqref{UT4}. We have two subcases.
\begin{itemize}
\item {Subcase 4.2.1: $1\leq a_-\leq b-1$.} This subcase is handled identically to Subcase $4.1.2$.
\item {Subcase 4.2.2: $a_-<1$ or $a_->b-1$.} Since $\UT_-$ obeys \eqref{UT4}, canceling both $r^{2-2b+l}$ and $r^{4-2b+l}$ from \eqref{Eqn:UPMlincomb} leads to the system of equations
\begin{align*}
\frac{(b-1)!(b-2)!}{(1-a_+)_{b-1}}C_3+\frac{(b-1)!(b-2)!}{(1-a_-)_{b-1}}C_4&=0,\\
\frac{(b-1)!(b-2)!}{(1-a_+)_{b-1}}\frac{b-a_+-1}{b-2}C_3+\frac{(b-1)!(b-2)!}{(1-a_-)_{b-1}}\frac{b-a_--1}{b-2}C_4&=0,
\end{align*}
which has a nontrivial solution precisely when $b-a_+-1=b-a_--1$ which, again, is impossible since $a_+\neq a_-$.
\end{itemize}\end{itemize}

\end{document}